\numberwithin{equation}{section}
\theoremstyle{plain}
\newtheorem{theorem}[equation]{Theorem}
\newtheorem{lemma}[equation]{Lemma}
\newtheorem{corollary}[equation]{Corollary}
\newtheorem{proposition}[equation]{Proposition}
\theoremstyle{definition}
\newtheorem{definition}[equation]{Definition}
\newtheorem{remark}[equation]{Remark}
\DeclareMathOperator\Aut{Aut}
\DeclareMathOperator\End{End}
\DeclareMathOperator\Ext{Ext}
\DeclareMathOperator\GKdim{GKdim}
\DeclareMathOperator\gr{gr}
\DeclareMathOperator\Kdim{Kdim}
\DeclareMathOperator\Span{Span}
\DeclareMathOperator\stab{stab}
\newcommand\An{\widetilde{A_{n-1}}}
\newcommand{\inv}{^{-1}}
\newcommand{\iso}{\cong}
\newcommand{\kk}{{\Bbbk}}
\newcommand\p{\mathsf{p}}
\newcommand\cO{\mathcal O}
\newcommand\cS{\mathcal S}
\newcommand\CC{\mathbb C}
\newcommand\NN{\mathbb N}
\newcommand\OO{\mathbb O}
\newcommand\even{\mathrm{even}}
\newcommand\odd{\mathrm{odd}}
\title{Auslander's Theorem for dihedral actions on preprojective algebras of type A}
\author[Barahona Kamsvaag]{Jacob Barahona Kamsvaag}
\author[Gaddis]{Jason Gaddis}
\address{Miami University, Department of Mathematics, Oxford, Ohio 45056} 
\email{barahoja@miamioh.edu,gaddisj@miamioh.edu}
\subjclass[2000]{16W22,16S35}
\keywords{Auslander's theorem, preprojective algebras, Calabi-Yau algebras}
\begin{document}

\begin{abstract}
Given an algebra $R$ and $G$ a finite group of automorphisms of $R$, there is a natural map $\eta_{R,G}:R\#G \to \mathrm{End}_{R^G} R$, called the Auslander map. A theorem of Auslander shows that $\eta_{R,G}$ is an isomorphism when $R=\mathbb{C}[V]$ and $G$ is a finite group acting linearly and without reflections on the finite-dimensional vector space $V$. The work of Mori and Bao-He-Zhang has encouraged study of this theorem in the context of Artin-Schelter regular algebras. We initiate a study of Auslander's result in the setting of non-connected graded Calabi-Yau algebras. When $R$ is a preprojective algebra of type $A$ and $G$ is a finite subgroup of $D_n$ acting on $R$ by automorphism, our main result shows that $\eta_{R,G}$ is an isomorphism if and only if $G$ does not contain all of the reflections through a vertex.
\end{abstract}

\maketitle

\section{Introduction}

In \cite{QWZ}, Qin, Wang, and Zhang initiated a study of the McKay correspondence for non-connected $\NN$-graded algebras in (global) dimension two. An important component to this study is Auslander's Theorem. This project is an attempt to study this result in the context of preprojective algebras of type $A$.

Let $V$ be a finite-dimensional vector space and $G$ a finite group acting linearly on $R=\CC[V]$. The \emph{Auslander map} $\gamma_{R,G}: R\# G \to \End_{R^G} R$ is defined as 
\begin{align}
\label{eq.auslander}
a \# g &\mapsto \left(\begin{matrix}R & \to & R \\ b & \mapsto & ag(b)\end{matrix}\right).
\end{align}
Auslander's Theorem, then, states that $\eta_{R,G}$ is an isomorphism if and only if $G$ acts without reflections. That is, $G$ is a small group \cite{Au}.

The Auslander map may be defined for any algebra $R$, commutative or noncommutative, and any subgroup $G$ of $\Aut R$. However, in general the map need not be injective or surjective. Bao, He, and Zhang introduced the pertinency invariant as a tool to study Auslander's Theorem in the noncommutative setting \cite{BHZ2,BHZ1}. If $\delta$ is any dimension function on an algebra $R$, $G$ is a finite group acting on $R$, and $f_G=\sum_{g \in G} 1\# g \in R\#G$, the \emph{pertinency} of the $G$-action is defined as 
\[ \p(R,G)=\delta(R)-\delta(R\#G/(f_G)).\] 
Throughout this work, we take $\delta$ to be the Gelfand-Kirillov dimension. Under suitable hypotheses, the Auslander map is an isomorphism for the pair $(R,G)$ if $\p(R,G) \geq 2$. 

Kirkman, Moore, Won, and the second author proved that the Auslander map is an isomorphism for $(\CC_{-1}[x_1,\hdots,x_n],G)$, where $G$ is any subgroup of $\cS_n$ acting linearly as permutations of the generators (i.e., $\sigma(x_i)=x_{\sigma(i)}$) \cite{GKMW}. Chan, Young, and Zhang computed the explicit pertinency value for many cyclic subgroups of $\cS_n$ in search of noncommutative cyclic isolated singularities \cite{CYZ3}. Crawford proved that the Auslander map is an isomorphism for any pair $(R,G)$ where $R$ is a two-dimensional Artin--Schelter regular algebra and $G$ is a small group, in which ``smallness'' is generalized to the noncommutative setting using the homological determinant \cite{craw1}. Further study of Auslander's theorem and applications of the pertinency invariant can be found in \cite{CKWZ2,CKZ,GY,HZ,MU}.

A natural generalization of the above is then to study the Auslander map in the context of non-connected algebras. That is, to replace the condition of Artin--Schelter regularity with the twisted Calabi--Yau condition (see \cite{ginz}). By a result of Bocklandt, if $R$ is Calabi-Yau of dimension 2, then $R$ is the preprojective algebra of a non-Dynkin quiver \cite[Theorem 3.2]{Bock}. More generally, Reyes and Rogalski have classified graded twisted Calabi-Yau algebras of global dimension 2 that are generated in degree 1 \cite{RR1}.

We review the definition of preprojective algebras, as well as relevant ring-theoretic and homological definitions necessary to our study, in Section \ref{sec.prelim}. Our main focus, however, will be the preprojective algebra $\Pi_{\An}$, where $\An$ is the extended Dynkin diagram of type $A$. In Theorem \ref{thm.scalar}, we establish some cases where the Auslander map is an isomorphism for cyclic subgroups of scalar automorphisms on $\Pi_{\An}$.

In Section \ref{sec.dih}, we study dihedral group actions on $R=\Pi_{\An}$. Each graph automorphism of the underlying graph $\Gamma_n$ of $\An$ extends to a graded algebra automorphism of $R$. From this, we obtain a subgroup of $\Aut_{\gr}(R)$ which is isomorphic to the graph automorphism group of $\Gamma_n$, namely the dihedral group on $n$ vertices $D_n$. We identify this group with $D_n$ itself. Our main result classifies the subgroups $G$ of $D_n$ for which $\eta_{R,G}$ is an isomorphism.

\begin{theorem}
\label{thm.main}
Let $R=\Pi_{\An}$ and $G$ a subgroup of $D_n$. The Auslander map $\eta_{R,G}$ is an isomorphism if and only there exists a reflection $\tau \in D_n$ that fixes a vertex and $\tau \notin G$.
\end{theorem}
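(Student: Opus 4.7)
The approach is through the pertinency invariant. Since $R = \Pi_{\An}$ is a noetherian graded Calabi--Yau algebra of GK-dimension $2$, the standard criterion of Bao--He--Zhang applies: $\eta_{R,G}$ is an isomorphism if and only if $\p(R,G) \geq 2$, equivalently $\GKdim(R\#G/(f_G)) = 0$, where $f_G = \sum_{g \in G} 1\#g$. Hence the theorem reduces to showing that $R\#G/(f_G)$ is finite-dimensional exactly when $G$ omits some vertex-fixing reflection of $D_n$.

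\textbf{Sufficiency.} Suppose $\tau \in D_n$ is a reflection fixing a vertex $v$ and $\tau \notin G$. I would prove $\GKdim(R\#G/(f_G)) = 0$ by constructing, for each vertex $w$, a uniform bound $N$ such that every path of length at least $N$ beginning at $w$ becomes zero in $R\#G/(f_G)$. The absence of $\tau$ creates an asymmetry: $f_G$ fails to symmetrize the two arrows incident to $v$ as it would in the presence of $\tau$. By multiplying $f_G$ by carefully chosen paths based at $v$ and reducing modulo the preprojective relations, one extracts annihilation identities that collapse sufficiently long paths. Up to the $D_n$-action and relabeling of vertices, the analogous analysis at the other vertices follows by transport of structure.

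\textbf{Necessity.} Suppose $G$ contains every reflection through a vertex. For $n$ odd this forces $G = D_n$; for $n$ even, $G$ must contain the index-$2$ dihedral subgroup $H \cong D_{n/2}$ generated by the vertex-fixing reflections, leaving the cases $G = H$ and $G = D_n$. To show $\GKdim(R\#G/(f_G)) \geq 1$, fix a vertex $v$ stabilized by some $\tau \in G$ and consider the infinite family of loops $\ell_k = (a_{v-1}^* a_{v-1})^k e_v$ in $e_v R e_v$. The preprojective relation at $v$ equates $a_{v-1}^* a_{v-1}$ with $a_v a_v^*$, so each $\ell_k$ is $\tau$-invariant. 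The task is to verify that the images of the $\ell_k$ in $R\#G/(f_G)$ remain linearly independent, which yields a subalgebra of positive GK-dimension and hence $\p(R,G) \leq 1$.

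The principal obstacle is the necessity direction for even $n$, where $G$ may be the proper subgroup $H$ rather than all of $D_n$. Here one must check that the ``extra'' elements of $H$, namely the even rotations that interleave the various vertex-fixing reflections, do not collapse the loop family at $v$ in the quotient. The required control is a precise accounting of how $f_G$ acts degree by degree on $e_v R e_v$, together with an argument that the preprojective relations do not supply enough additional identities to force finite-dimensionality. This combinatorial bookkeeping, tracking paths in $\An$ and their $G$-orbits against the preprojective relations, is where I expect the bulk of the difficulty to lie.
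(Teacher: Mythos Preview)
Your overall framing via pertinency is correct and matches the paper. However, both directions differ substantially from the paper's arguments, and the necessity direction has a genuine gap.

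\textbf{Sufficiency.} Your sketch is vaguer and more complicated than necessary. The paper's argument is a one-line idempotent trick: if $\tau$ is the reflection fixing $e_i$ and $\tau\notin G$, then \emph{no} nontrivial element of $G$ fixes $e_i$, so $e_i g(e_i)=0$ for every $g\in G\setminus\{1\}$ and hence $e_i f_G e_i = e_i\#1\in (f_G)$. The Structure Lemma then forces every path of length at least $2n+1$ to factor through $e_i$, so it lies in $(f_G)$. Your talk of ``asymmetry'' and ``failing to symmetrize the two arrows incident to $v$'' suggests you did not spot this; the preprojective relations play only the minor role of normalizing paths, not of producing the key annihilation.

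\textbf{Necessity.} Here your route is genuinely different from the paper's, and your version is incomplete. You propose to exhibit loops $\ell_k=(\alpha_{v-1}^*\alpha_{v-1})^k$ at a fixed vertex and argue they survive in $R\#G/(f_G)$. But $\tau$-invariance of $\ell_k$ says nothing about whether $\ell_k\#1$ lies in the two-sided ideal $(f_G)$; you would need to control all products $a\,f_G\,b$ landing in the identity component, and you give no mechanism for this beyond ``combinatorial bookkeeping.'' The paper avoids this entirely: it computes the invariant ring explicitly (showing $R^{D_n}\cong\kk[s_1,s_2]$, and $R^{W_n}$ is a specific twisted Calabi--Yau path algebra), proves $R$ is a \emph{free} $R^G$-module with basis $\{e_0,\dots,e_{n-1},\alpha_0,\dots,\alpha_{n-1}\}$, and then observes that $\alpha_{n-1}R^G\cong e_0R^G(-1)$ yields an element of $\End_{R^G}(R)$ of negative degree. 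Since $R\#G$ is non-negatively graded, the Auslander map cannot be surjective. This structural argument sidesteps any direct analysis of $(f_G)$, which is precisely the step you flagged as the principal obstacle and did not resolve.
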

\begin{proof}
Sufficiency is proved in Theorem \ref{thm.proper}. Necessity follows from Theorems \ref{thm.Dn} and \ref{thm.Wn}.
\end{proof}

A natural next question would be to study actions on preprojective algebras associated to other extended Dynkin types. However, as these exhibit significantly fewer symmetries we do not consider them here.

\section{Preprojective algebras}
\label{sec.prelim}

Throughout, we assume that $\kk$ is an algebraically closed field of characteristic zero. All algebras are assumed to be $\kk$-algebras unless otherwise noted.

An algebra $R$ is called ($\NN$-)\emph{graded} if there exists a collection of $\kk$-vector subspaces $\{R_n\}_{n=0}^\infty$ of $R$ such that $R=\bigoplus_{n\in\NN} R_n$ and $R_iR_j\subseteq R_{i+j}$ for all $i,j\in\NN$. We say that $R$ is \emph{locally finite} if $\dim_\kk(R_n)<\infty$ for all $n\in \NN$. If $R_0=\kk$, we say that $R$ is \emph{connected}. A $\kk$-algebra automorphism $\phi$ of $R$ is called \emph{graded} if $\phi(R_n)=R_n$ for all $n\in\NN$. We denote the group of $\NN$-graded automorphisms of $R$ by $\Aut_{\gr} (R).$ 

Let $R$ be an algebra and let $G$ be a finite subgroup of $\Aut(R)$. Let $R\# G$ denote the set of formal sums
\[ R\# G = \left\lbrace \sum a_g \# g : a_g \in R, g \in G \right\rbrace.\]
Define a multiplication on $R\# G$ by 
\[ (r_1\# g_1)(r_2\# g_2)= r_1g_1(r_2)\#  g_1g_2, \quad r_i \# g_i \in R\# G,\] 
extended linearly. We call $R\# G$ the \emph{skew group ring} $R\# G$. Note that, under this definition, the group ring $\kk G$ is $\kk \#G$ with trivial $G$ action.

A \emph{quiver} $Q$ is a tuple $(Q_0,Q_1,s,t)$ consisting of a set of \emph{vertices} $Q_0=\{e_0,\dots,e_n\}$, a set of \emph{arrows} $Q_1$, and \emph{source} and \emph{target} functions $s,t:Q_1\rightarrow Q_0$. For any arrow $\alpha$, we call the vertex $s(\alpha)$ the \emph{source} of $\alpha$ and the vertex $t(\alpha)$ the \emph{target} of $\alpha$. We say $Q$ is \emph{finite} if $Q_0$ and $Q_1$ are finite, and \emph{schurian} if given any two vertices $i$ and $j$ there is at most one arrow with source $i$ and target $j$. The \emph{adjacency matrix} of a quiver $Q$ is the matrix $M_Q$ in which $(M_Q)_{ij}$ denotes the number of arrows $i \to j$.

A \emph{path of length $\ell$ with source $e_i$ and target $e_j$} is a word of the form $\alpha_1\alpha_2\cdots\alpha_\ell$ where $\alpha_k\in Q_1$ for each $k=1,\dots,\ell$, and $s(\alpha_1)=e_i$, $t(\alpha_\ell)=e_j$, and $t(\alpha_k)=s(\alpha_{k+1})$ for each $k=1,\dots,\ell-1$. If $p=\alpha_1\cdots\alpha_\ell$, we extend the source and target functions to the set of all paths by the rule $s(p)=s(\alpha_1)$ and $t(p)=t(\alpha_\ell)$.
We denote the set of paths of length $\ell$ by $Q_\ell$, and treat the vertices $e_i$ as \emph{trivial paths} of length 0 with $s(e_i)=t(e_i)=e_i$.

Let $Q$ be a quiver.
The \emph{path algebra} of a quiver $Q$ over the field $\kk$, denoted $\kk Q$, is the algebra with $\kk$-basis the set of paths $\bigcup_{\ell=0}^\infty Q_\ell$ and multiplication defined by concatenation. That is, given paths $p=\alpha_1\cdots\alpha_\ell$ and $q=\beta_1\cdots\beta_k$,
$pq=\alpha_1\cdots\alpha_\ell\beta_1\cdots\beta_k$ if $t(p)=s(q)$ and $pq=0$ otherwise.
For any $e\in Q_0$ and any path $p$, we also have the following:
\begin{align*}
    ep =\begin{cases}
    p &\quad s(p)=e\\
    0 &\quad s(p)\neq e
    \end{cases} \qquad
    pe =\begin{cases}
    p &\quad t(p)=e\\
    0 &\quad t(p)\neq e.
    \end{cases}
\end{align*}
Note that a path algebra $\kk Q$ on a finite quiver is graded by $\{(\kk Q)_\ell\}_{\ell=0}^\infty$ where $(\kk Q)_\ell=\Span_\kk(Q_\ell)$, and there are only finitely many paths of any given length, so $\kk Q$ is locally finite.

The \emph{double} of a quiver $Q$, denoted $\overline{Q}$, is defined by setting $\overline{Q}_0=Q_0$ and for every arrow $\alpha \in Q_1$ with $s(\alpha)=e_i$ and $t(\alpha)=e_j$, we add an arrow $\alpha^*$ with $s(\alpha^*)=e_j$ and $t(\alpha^*)=e_i$. We call $Q_1$ the set of \emph{nonstar arrows} and $Q_1^*:=\overline{Q_1}\setminus Q_1$ the set of \emph{star arrows}.

\begin{definition}
Let $Q$ be a finite quiver and set $\Omega=\sum_{\alpha\in Q_1} \alpha\alpha^*-\alpha^*\alpha \in \overline{Q}$. The \emph{preprojective algebra} associated to $Q$, denoted $\Pi_Q$, is the quotient $\kk \overline{Q}/(\Omega)$. We call $\Omega$ the \emph{preprojective relation}.
\end{definition}

The type $A$ extended Dynkin quivers $\An$, $n\geq 3$, are given by
\begin{center}
\begin{tikzcd}
                           &                            & e_0 \arrow[lld, "\alpha_0"']      &                                    &                                      \\
e_1 \arrow[r, "\alpha_1"'] & e_2 \arrow[r, "\alpha_2"'] & \cdots \arrow[r, "\alpha_{n-3}"'] & e_{n-2} \arrow[r, "\alpha_{n-2}"'] & e_{n-1} \arrow[llu, "\alpha_{n-1}"']
\end{tikzcd}
\end{center}
We focus on preprojective algebras corresponding to this type.
If $Q=\An$, then $\overline{Q}$ can be characterized as follows.
The vertex set $\overline{Q_0}$ of $\overline{Q}$ is $\{e_0,\dots,e_{n-1}\}$, and there is exactly one nonstar arrow, $\alpha_i$, from $e_i$ to $e_{i+1}$ and one star arrow, $\alpha_i^*$, from $e_{i+1}$ to $e_{i}$ for each $i=0,\dots,n-1$ where the index is taken mod $n$. That is, $\overline{Q}$ is schurian. For example, the double of $\widetilde{A_2}$ is presented below:
\begin{center}
\begin{tikzcd}
                                                          & e_0 \arrow[ld, "\alpha_0"] \arrow[rd, "\alpha_2^*", shift left=2] &                                                           \\
e_1 \arrow[rr, "\alpha_1"] \arrow[ru, "\alpha_0^*", shift left=2] &                                                           & e_2 \arrow[lu, "\alpha_2"] \arrow[ll, "\alpha_1^*", shift left=2]
\end{tikzcd}
\end{center}
We note that $\widetilde{A_1}$ is also defined. However, as its double is not schurian, it does not fit into the theory we have developed.

The preprojective algebra $\Pi_{\An}$ has nice ring-theoretic properties, as discussed in the next proposition.
Before this, we review some of the definitions. 
For others, such as \emph{graded injectively smooth} and the \emph{generalized Gorenstein} condition, we refer to \cite{SZ} and \cite{SW2}, respectively.

Let $R = \bigoplus_{n \in \NN} R_n$ be a locally finite graded algebra.
The \emph{Gelfand--Kirillov (GK) dimension} of $R$ is defined as
\[ \GKdim (R) = \overline{\lim_{n\to\infty}}\log_n\left(\sum_{i\leq n}\dim_\kk( R_i) \right).\] 
Now let $\delta$ be a dimension function on $R$. For example, we may have $\delta=\GKdim$ or $\delta=\Kdim$, the Krull dimension. The ring $R$ is \emph{$\delta$-Cohen--Macaulay} ($\delta$-CM) if $\delta(R)$ is finite, and for every nontrivial finitely generated right $R$-module $M$, $j(M)+\delta(M) = \delta(R)$,
where $j(M)=\min\{i:\Ext_R^i (M,R)\neq 0\}$ denotes the \emph{grade} of $M$.

The \emph{(total) Hilbert series} of $R$ is the formal power series
\[ H_R^{tot} = \sum_{k=0}^\infty \dim_{\kk}(R_k) t^k.\]
If $R_0 = \kk^n$ with primitive idempotents $\{e_0,\hdots,e_{n-1}\}$, then the \emph{matrix-valued Hilbert series} is the matrix with entries
\[ (H_R)_{ij} = \sum_{k=0}^\infty \dim_{\kk}(e_{i+1} R_k e_{j+1} ) t^k.\]

\begin{proposition}
\label{prop.props}
Let $R=\Pi_{\An}$. Then $R$ is a locally finite graded noetherian algebra of global and GK dimension 2. Moreover, $R$ is $\GKdim$-CM and $H_R = (1-M_{\overline{Q}} t)^{-2}$.
\end{proposition}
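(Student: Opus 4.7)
The strategy relies on two structural facts for preprojective algebras of type $\widetilde{A}$: the classical isomorphism (or at least Morita equivalence) between $R = \Pi_{\An}$ and the skew group algebra $\kk[u,v]\#\ZZ_n$ (with $\ZZ_n$ acting diagonally by $u\mapsto \zeta u$, $v\mapsto \zeta^{-1} v$ for $\zeta$ a primitive $n$th root of unity), together with the length-$2$ Koszul bimodule resolution available for preprojective algebras of non-Dynkin type (which underlies Bocklandt's Calabi--Yau classification cited above).

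First, local finiteness is immediate, since $R$ is a graded quotient of the locally finite path algebra $\kk\overline{Q}$. For noetherianness and global dimension $2$, I would invoke the Morita equivalence above: $\kk[u,v]\#\ZZ_n$ is noetherian (as $\kk[u,v]$ is noetherian and $\ZZ_n$ is finite) and has global dimension $2$ (standard for the skew group algebra of a finite group on a regular noetherian algebra of dimension $2$), and both properties are Morita-invariant.

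For the Hilbert series, the plan is to use the standard length-$2$ bimodule resolution
\[
0 \to R\otimes_S R(-2) \to R\otimes_S V\otimes_S R(-1) \to R\otimes_S R \to R \to 0,
\]
where $S=\kk Q_0$ and $V=\Span_\kk(\overline{Q_1})$ sits in degree $1$. Taking matrix Hilbert series of this exact sequence and rearranging produces the stated formula for $H_R$; polynomial growth of degree $2$ in the resulting series then gives $\GKdim(R)=2$.

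The $\GKdim$-CM property is the main technical obstacle. The plan here is again to use the identification with $\kk[u,v]\#\ZZ_n$: this algebra is module-finite over its center $\kk[u,v]^{\ZZ_n}$, a commutative Gorenstein ring of Krull dimension $2$ (the type $A$ Kleinian singularity), so CM for $\kk[u,v]\#\ZZ_n$ follows from CM of this center by standard results on module-finite extensions, and then transfers to $R$ across Morita equivalence. The subtlety is to verify that $\GKdim$ on $R$-modules aligns with $\Kdim$ on the equivalent central modules; this should hold because $R$ has polynomial growth and is finitely generated over its affine center, forcing the two dimensions to coincide and ensuring that the grade-dimension identity defining CM holds with $\delta = \GKdim$.
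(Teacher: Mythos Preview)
Your approach is correct but genuinely different from the paper's. The paper proceeds entirely by citation: noetherianity and PI from Baer--Geigle--Lenzing \cite{BGL}; the generalized Gorenstein property from Weispfenning \cite{SW1}, which yields graded injective smoothness; then $\Kdim$-CM (hence $\GKdim$-CM) from Stafford--Zhang \cite{SZ}; and the Hilbert series from \cite{Bock,RR1}. Your route through the McKay isomorphism $\Pi_{\An}\cong \kk[u,v]\#\ZZ_n$ is more concrete and arguably more illuminating, since it explains \emph{why} all these properties hold by reducing to the polynomial ring. It also makes noetherianity and $\gldim=2$ essentially immediate, whereas the paper's chain of references is shorter to write but opaque. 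The Hilbert series portion of your argument (the length-$2$ Koszul bimodule resolution) is in fact exactly what underlies the references the paper cites, so there the two approaches converge.

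One point to tighten: in the CM step you write that ``CM for $\kk[u,v]\#\ZZ_n$ follows from CM of this center by standard results on module-finite extensions.'' Module-finiteness over a CM center is not by itself sufficient; what you need is that $\kk[u,v]\#\ZZ_n$ is a \emph{maximal} Cohen--Macaulay module over $\kk[u,v]^{\ZZ_n}$. This is true here (as a $\kk[u,v]^{\ZZ_n}$-module it is a direct sum of copies of $\kk[u,v]$, each of which is MCM), but it should be said. A cleaner alternative is to bypass the center entirely and use the fact that when $|G|$ is invertible, $R\#G$ inherits Auslander regularity and the CM property directly from $R$; applying this with $R=\kk[u,v]$ gives the conclusion in one step, and Morita invariance of grade and GK dimension then transfers it to $\Pi_{\An}$.
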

\begin{proof}
That $R$ is locally finite graded follows because $Q$ (and hence $\overline{Q}$) is finite. By \cite[Theorem 6.5]{BGL}, $R$ is a noetherian polynomial identity (PI) ring. Further, $R$ satisfies the generalized Gorenstein condition \cite[Proposition 2.11]{SW1}, which implies that $R$ is right graded injectively smooth. Then by \cite[Theorem 1.3]{SZ}, $R$ is $\Kdim$-CM which implies that $R$ is $\GKdim$-CM by \cite[Lemma 4.3]{SZ}. The statement on the Hilbert series follows from \cite{Bock} and \cite{RR1}.
\end{proof}

The next lemma gives a canonical form for paths in the preprojective algebra $\Pi_{\An}$.

\begin{lemma}[Structure Lemma]
\label{lem.structure}
Let $Q=\An$ and let $R=\Pi_Q$. Let $p$ be a (nonconstant) monomial in $R$.
Then there exist nonstar arrows $\beta_1,\dots,\beta_\ell$ and star arrows $\gamma_1,\dots,\gamma_m$ such that $p=\beta_1\cdots\beta_\ell\gamma_1\cdots\gamma_m$.
\end{lemma}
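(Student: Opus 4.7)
The plan is to deduce the canonical form by a bubble-sort argument driven by a local consequence of the preprojective relation. First I would extract this local relation: since in $\overline{\An}$ the only paths of length two from $e_i$ back to $e_i$ are $\alpha_i\alpha_i^*$ and $\alpha_{i-1}^*\alpha_{i-1}$, computing $e_i\Omega e_i$ shows that in $R$ one has $\alpha_i\alpha_i^* = \alpha_{i-1}^*\alpha_{i-1}$ at every vertex $e_i$, equivalently $\alpha_i^*\alpha_i = \alpha_{i+1}\alpha_{i+1}^*$. Both sides of this identity are loops at $e_{i+1}$, so applying it to a substring of a monomial preserves the source and target of the ambient word.

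Next, represent $p$ by a word $w = w_1 w_2 \cdots w_N$ in $\overline{Q_1}$ and define the inversion count $\operatorname{inv}(w)$ to be the number of pairs $i<j$ with $w_i$ a star arrow and $w_j$ a nonstar arrow. I would induct on $\operatorname{inv}(w)$. The base case $\operatorname{inv}(w)=0$ means every nonstar arrow precedes every star arrow, which is exactly the form asserted by the lemma. For the inductive step, assume $\operatorname{inv}(w)>0$; a short minimality argument (take an inversion $(i,j)$ with $j-i$ minimal and inspect the type of $w_{i+1}$) produces a consecutive inverted pair at some index $k$, meaning $w_k$ is a star and $w_{k+1}$ is a nonstar.

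Because $\overline{\An}$ is schurian, the requirement $t(w_k) = s(w_{k+1})$ forces $w_k = \alpha_i^*$ and $w_{k+1} = \alpha_i$ for some $i$. I would then apply the local relation to rewrite this consecutive pair as $\alpha_{i+1}\alpha_{i+1}^*$, producing a word $w'$ with $w' = w$ in $R$. A direct bookkeeping of how the type-flips at positions $k$ and $k+1$ affect pair counts with every other position $a$ shows that those contributions cancel pairwise, leaving the net change $\operatorname{inv}(w') = \operatorname{inv}(w)-1$ coming only from the swapped pair itself. Induction then finishes.

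The main subtlety is precisely this last bookkeeping step: one must verify that the rewriting strictly decreases $\operatorname{inv}$, which works because position $k$ flips from star to nonstar while position $k+1$ flips from nonstar to star, so their influences on any outside position exactly offset. A minor secondary point is to check that the substitution rule is always applicable when an inversion exists; the schurian property together with the fact that $w$ represents an honest path guarantee that a consecutive inverted pair really is of the form $\alpha_i^*\alpha_i$, rather than a mismatched pair for which the local relation would not apply.
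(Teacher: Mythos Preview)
Your argument is correct and takes essentially the same approach as the paper: both extract the local relation $\alpha_i^*\alpha_i=\alpha_{i+1}\alpha_{i+1}^*$ from $e_i\Omega e_i$ and use it to commute star arrows past nonstar arrows until the word is sorted. The only cosmetic difference is bookkeeping: the paper displays how a single star arrow slides past an entire block of nonstar arrows in one chain of equalities and then inducts on the number of star arrows, whereas you perform one adjacent swap at a time and induct on the inversion count; your packaging is slightly more formal, but the content is the same.
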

\begin{proof}
Given any idempotent $e_i \in Q_0$, $e_i\Omega e_i \in (\Omega)$ and so the following relations hold in $\Pi_{\An}$:
\begin{align}
\label{eq.relns}
0 = e_i\Omega e_i = \alpha_i \alpha_i^* - \alpha_{i-1}^*\alpha_{i-1}
\end{align}
where the indices are taken mod $n$.

Given any star arrow $\alpha_i^*$, the only nonstar arrow $\beta$ such that $\alpha_i^*\beta\neq 0$ is $\alpha_i$.
Consequently, whenever a star arrow is followed by a nonstar arrow, we can use \eqref{eq.relns} to obtain a nonstar arrow followed by a star arrow instead. By repeated use of \eqref{eq.relns} we have
\begin{align*}
    \alpha_j^*\alpha_j\alpha_{j+1}\cdots\alpha_{j+k} &= \alpha_{j+1}\alpha_{j+1}^*\alpha_{j+1}\alpha_{j+2}\cdots\alpha_{j+k}\\
    &=\alpha_{j+1}\alpha_{j+2}\alpha_{j+2}^*\alpha_{j+2}\alpha_{j+3}\cdots\alpha_{j+k}\\
    &\ \vdots\\
    &=\alpha_{j+1}\cdots\alpha_{j+k}\alpha_{j+k}^*\alpha_{j+k}\\
    &=\alpha_{j+1}\cdots\alpha_{j+k+1}\alpha_{j+k+1}^*.
\end{align*}
By induction on the number of star arrows, it follows that we can push all star arrows to the right.
\end{proof}

The invariant theory of preprojective algebras was studied by Weispfenning, with particular interest towards a version of the Shephard--Todd--Chevalley theorem \cite{SW1,SW2}. Our interest is in a version of Auslander's Theorem for group actions on the projective algebra $R=\Pi_{\An}$. Particularly relevant to the present investigation is the following theorem due to Bao, He, and Zhang.

\begin{theorem}[{\cite[Theorem 3.5]{BHZ1}}]
\label{thm.bhz}
Let $R$ be a Noetherian locally-finite graded algebra and $G$ a finite subgroup of $\Aut_{\gr}(R)$. Assume further that $R$ is $\GKdim$-CM of global dimension 2 with $\GKdim R\geq 2$. Then $\eta_{R,G}$ is a graded algebra isomorphism if and only if $\p(R,G)\geq 2$.
\end{theorem}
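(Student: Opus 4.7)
My approach is to analyze the kernel $K$ and cokernel $C$ of $\eta := \eta_{R,G}$ through the exact sequence
\begin{equation*}
0 \to K \to R\#G \xrightarrow{\ \eta\ } \End_{R^G}(R) \to C \to 0
\end{equation*}
of graded $R\#G$-modules, and to translate the pertinency hypothesis into a vanishing statement for $K$ and $C$ using the $\GKdim$-Cohen--Macaulay property together with $\gldim R = 2$.

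The core step is an annihilation lemma: both $K$ and $C$ are annihilated by the two-sided ideal $(f_G) \subseteq R\#G$, so they become modules over $R\#G/(f_G)$. For $K$, this is elementary: $\eta(f_G)$ acts on $R$ as the trace $b \mapsto \sum_{g \in G} g(b) \in R^G$, and combined with $K \cap \eta(R\#G) = 0$ this gives $(f_G) K = 0$. For $C$, one shows that every element of $\End_{R^G}(R)$ is congruent modulo $\eta(R\#G)$ to one annihilated by $f_G$, exploiting a structural description of $\End_{R^G}(R)$ via the trace ideal $\tau(R) \subseteq R^G$. Granted the annihilation, we obtain the key bound
\begin{equation*}
    \GKdim K,\ \GKdim C \ \leq\ \GKdim\bigl(R\#G/(f_G)\bigr) \ =\ \GKdim R - \p(R,G).
\end{equation*}

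For the forward direction, assume $\p(R,G) \ge 2$, so $\GKdim K = \GKdim C = 0$ and both modules are finite-dimensional over $\kk$. Because $G$ is finite and acts by graded automorphisms, $R\#G$ is a module-finite extension of $R$, inheriting both the $\GKdim$-CM property and global dimension $2$. Thus any finite-dimensional $R\#G$-module $M$ satisfies $j(M) = 2$. Applied to $K$, the inclusion $K \hookrightarrow R\#G$ would be a nonzero element of $\operatorname{Hom}_{R\#G}(K, R\#G)$, contradicting $j(K) = 2$ unless $K = 0$. For $C$ one runs a parallel Auslander-type duality argument using the reflexivity of $\End_{R^G}(R)$ inherited from $R$ over $R^G$. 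Conversely, when $\eta$ is an isomorphism, the quotient $R\#G/(f_G)$ is identified with $\End_{R^G}(R)/(\eta(f_G))$, and the CM property of $R$ over $R^G$ then forces the trace ideal $\tau(R)$ to cut out a locus of codimension at least $2$ in $R^G$, yielding $\p(R,G) \ge 2$.

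The principal obstacle is the annihilation property for $C$: whereas $K$ sits inside the algebra $R\#G$ and can be controlled by elementary dimension counting, $C$ lives in $\End_{R^G}(R)$, whose structure as an $R\#G$-module is only loosely controlled by $R$ itself. The argument here requires a noncommutative replacement for the classical double-centralizer / reflexive-module equivalence that underlies Auslander's original proof, and this replacement is the main technical innovation of the pertinency framework.
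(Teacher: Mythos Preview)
The paper does not prove this theorem. Theorem~\ref{thm.bhz} is quoted from \cite[Theorem~3.5]{BHZ1} and stated without proof; it is used purely as a black box (for instance, in the proof of Theorem~\ref{thm.aus}). There is therefore no ``paper's own proof'' against which to compare your proposal.

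Independently of that, your sketch has genuine gaps. The assertion ``$K \cap \eta(R\#G) = 0$'' is a type mismatch, since $K \subseteq R\#G$ while $\eta(R\#G) \subseteq \End_{R^G}(R)$, and the inference to $(f_G)K = 0$ does not follow from anything you have written. The treatment of the cokernel is, by your own description, only a pointer toward an unstated ``noncommutative replacement'' for a reflexivity/double-centralizer argument; likewise the converse direction invokes a codimension-$2$ statement about the trace ideal without justification. The overall architecture---show $K$ and $C$ are $(f_G)$-torsion, bound their GK dimension by $\GKdim(R\#G/(f_G))$, then use the CM and finite-global-dimension hypotheses to force vanishing---is indeed the shape of the argument in \cite{BHZ1}, but turning that outline into a proof requires substantial work that you have not supplied. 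Within the scope of this paper the appropriate course is simply to cite \cite{BHZ1}, as the authors do.
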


Let $R'$ be the image of $R$ in the composition
\[ R \hookrightarrow R \# G \to (R\# G)/(f_G).\]
We call $R'$ the \emph{identity component} of $(R\# G)/(f_G)$, which we can associate with $R$.
By \cite[Lemma 5.2]{BHZ1}, $\GKdim(R') = \GKdim((R\# G)/(f_G))$.

\begin{theorem}
\label{thm.aus}
Let $n\geq 3$, let $R=\Pi_{\An}$, let $G$ be a finite subgroup of $\Aut_{\gr}(R)$, and let $R'$ be the identity component of $(R\# G)/(f_G)$. Then $\eta_{R,G}$ is a graded algebra isomorphism if and only if $\dim_\kk(R')<\infty$.
\end{theorem}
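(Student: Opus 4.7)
The plan is to derive this as a direct consequence of Theorem \ref{thm.bhz} combined with the structural information about $R = \Pi_{\An}$ collected in Proposition \ref{prop.props}. By that proposition, $R$ is noetherian, locally finite graded, $\GKdim$-Cohen--Macaulay of global dimension $2$, with $\GKdim R = 2$. Consequently every finite $G \leq \Aut_{\gr}(R)$ meets the hypotheses of Theorem \ref{thm.bhz}, so $\eta_{R,G}$ is a graded isomorphism exactly when $\p(R,G) \geq 2$. Unwinding the definition of pertinency and substituting $\GKdim R = 2$, the inequality $\p(R,G) \geq 2$ is equivalent to
\[
\GKdim\bigl((R\#G)/(f_G)\bigr) \leq 0.
\]

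The next step uses the observation recorded immediately before the statement of the theorem: by \cite[Lemma 5.2]{BHZ1}, $\GKdim R' = \GKdim((R\#G)/(f_G))$, where $R'$ is the identity component. So the condition reduces to $\GKdim R' \leq 0$.

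Finally, I would translate ``$\GKdim R' \leq 0$'' into finite-dimensionality over $\kk$. Since $f_G$ is a homogeneous element of degree $0$ in the natural $\NN$-grading on $R\#G$ (arising from the grading on $R$ and putting $\kk G$ in degree $0$), the ideal $(f_G)$ is graded. Hence $R'$, the image of the graded map $R \hookrightarrow R\#G \to (R\#G)/(f_G)$, is a locally finite $\NN$-graded quotient of $R$. For such an algebra, $\GKdim R' \leq 0$ forces the partial sums $\sum_{i \leq n} \dim_\kk R'_i$ to be bounded (any unbounded weakly increasing integer sequence produces GK dimension at least $1$), which occurs if and only if $\dim_\kk R' < \infty$. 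Conversely, finite-dimensionality gives $\GKdim R' = 0$. This finishes the chain of equivalences.

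The argument is essentially a three-step concatenation of previously established facts, so I do not anticipate a real obstacle. The one step that deserves explicit justification is the last: verifying that $R'$ genuinely inherits an $\NN$-grading for which it is locally finite, so that the standard equivalence between ``$\GKdim \leq 0$'' and ``finite dimensional'' applies.
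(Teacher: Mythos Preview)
Your proposal is correct and follows essentially the same route as the paper: invoke Proposition~\ref{prop.props} to place $R$ within the hypotheses of Theorem~\ref{thm.bhz}, then unwind $\p(R,G)\geq 2$ using $\GKdim R=2$ and the equality $\GKdim R'=\GKdim((R\#G)/(f_G))$ from \cite[Lemma 5.2]{BHZ1}. The only difference is cosmetic: the paper phrases the last equivalence in terms of $(R\#G)/(f_G)$ being finite dimensional, whereas you route directly through $R'$; both are equivalent for the reasons you state.
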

\begin{proof}
By Proposition \ref{prop.props}, $R$ satisfies the conditions of Theorem \ref{thm.bhz}. That is, $\eta_{R,G}$ is an isomorphism if and only if $\p(R,G)\geq 2$. Since $\GKdim(R)=2$, then $\p(R,G)\geq 2$  if and only if $\GKdim ((R\# G)/(f_G))=0$, which is equivalent to $(R\# G)/(f_G)$ being a finite dimensional $\kk$-vector space.
\end{proof}

Since $R$ is locally finite, $\kk\cup \bigcup_{\ell< m} R_\ell$ is finite dimensional for all $m\in\NN$. Consequently, if there exists $m\in \NN$ such that every path of length at least $m$ is in $(f_G)$, then $R'$ has a complete set of coset representatives in $\kk\cup\bigcup_{\ell< m} R_m$. That is, $\dim_\kk(R')<\infty$.

We conclude this section with a discussion of graded automorphisms of $\Pi_{\An}$.

Let $Q$ be a quiver. A \emph{quiver automorphism} $\sigma=(\sigma_0, \sigma_1)$ of $Q$ is a pair of bijections $\sigma_0:Q_0\to Q_0$ and $\sigma_1:Q_1\to Q_1$ such that for all $\alpha\in Q_1$, $s(\sigma_1(\alpha))=\sigma_0(s(\alpha))$ and $t(\sigma_1(\alpha))=\sigma_0(t(\alpha))$.
Every quiver automorphism extends to a graded automorphism of $\kk Q$ which, by an abuse of notation, we again denote by $\sigma$ (see \cite{KW}). 

Conversely, if $\sigma$ is a graded automorphism of $\Pi_{\An}$, then $\sigma_0$ permutes the set $Q_0$. Since $Q$ is schurian, then for any $\alpha \in Q_1$, $\sigma(\alpha)$ is necessarily a nonzero scalar multiple of the unique arrow from $\sigma_0(s(\alpha))$ to $\sigma_0(t(\alpha))$. 
First, we will consider automorphisms which fix the vertices of $\An$. In Section \ref{sec.dih}, we study automorphisms corresponding to dihedral automorphisms on $\An$. 

For the remainder of this section, let $Q=\An$ and $R=\Pi_{\An}$. Let 
\[ F=\{\sigma\in\Aut_{\gr}(R): |\sigma|<\infty \text{ and } \sigma(e_i)=e_i\text{ for all }i=0,\dots,n-1\}.\] 
Fixed subrings of $R$ under automorphisms in $F$ were studied by Weispfenning \cite{SW1,SW2}. 

Let $\sigma \in F$. By the above discussion, $\sigma(\alpha) \in \Span_\kk\{\alpha\}$ for each $\alpha \in Q_1$. Thus, there exists $\xi_i,\xi_i^* \in \kk^\times$ such that $\sigma(\alpha_i)=\xi_i \alpha_i$ and $\sigma(\alpha_i^*)=\xi_i^* \alpha_i^*$ for each $i=0,\hdots,n-1$. Since $\sigma$ is of finite order, each $\xi_i,\xi_i^*$ must be a root of unity. On the other hand, $\sigma(\Omega) \in \Span_{\kk}(\Omega)$ and so $\sigma(\Omega)=\omega \Omega$ for some $\omega \in \kk^\times$. It is not difficult to show using the preprojective relation that $\xi_i\xi_i^*=\omega$ for all $i$. The value $\omega$ in this case is the \emph{homological determinant} of the $\sigma$-action on $R$ \cite{gourmet,SW2}. We will consider cases in which $\omega=1$.

Our primary tool for studying the Auslander map for cyclic subgroups of $F$ is the following result of He and Zhang, which we have rephrased for our purpose.

\begin{lemma}[{\cite[Lemma 3.4]{HZ}}]
\label{lem.seq}
Let $\sigma\in F$, let $G=\langle\sigma\rangle$, and and let $|\sigma|=m$. Assume there are elements $a_0,\dots,a_{m-1}\in R$ such that $\sigma(a_i)=\zeta a_i$ for $i=0,\dots,m-1$, where $\zeta$ is an $m$th primitive root of unity. Then
$a_0a_1\cdots a_{m-1}\# 1 \in (f_G)$.
\end{lemma}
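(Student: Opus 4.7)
The plan is to realize $a_0 a_1 \cdots a_{m-1} \# 1$ as an explicit $\kk$-linear combination of elements of the shape $(p \# 1) f_G (q \# 1)$, each of which automatically lies in the two-sided ideal $(f_G)$. The key is to exploit the fact that any product of the $a_i$ is itself a $\sigma$-eigenvector, and then invoke character orthogonality on $\ZZ/m\ZZ$ to isolate the $\sigma^0$-component.

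First I would record the basic identity in the skew group ring: for any $p,q \in R$,
\[
(p \# 1) f_G (q \# 1) = \sum_{k=0}^{m-1} (p \# \sigma^k)(q \# 1) = \sum_{k=0}^{m-1} p\,\sigma^k(q) \# \sigma^k.
\]
Next, for each $j \in \{0,1,\dots,m-1\}$ set $p_j = a_0 a_1 \cdots a_j$ and $q_j = a_{j+1} a_{j+2} \cdots a_{m-1}$ (with $q_{m-1} = 1$). Since $q_j$ is a product of $m-1-j$ of the $a_i$'s, the hypothesis $\sigma(a_i) = \zeta a_i$ gives $\sigma^k(q_j) = \zeta^{(m-1-j)k} q_j$. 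Writing $b = a_0 a_1 \cdots a_{m-1}$, I obtain
\[
x_j := (p_j \# 1)\, f_G\, (q_j \# 1) = \sum_{k=0}^{m-1} \zeta^{(m-1-j)k}\, b \# \sigma^k \;\in\; (f_G).
\]

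Finally, as $j$ runs through $0,1,\dots,m-1$, the exponent $\ell := m-1-j$ also runs through $0,1,\dots,m-1$, so I have produced an element of $(f_G)$ for every character of the cyclic group $\langle \sigma \rangle$. Averaging and using the orthogonality relation
\[
\frac{1}{m} \sum_{\ell=0}^{m-1} \zeta^{\ell k} = \delta_{k,0}, \qquad k \in \{0,1,\dots,m-1\},
\]
which holds because $\zeta$ is a primitive $m$-th root of unity, yields
\[
\frac{1}{m} \sum_{j=0}^{m-1} x_j \;=\; \sum_{k=0}^{m-1} \left(\frac{1}{m}\sum_{j=0}^{m-1} \zeta^{(m-1-j)k}\right) b \# \sigma^k \;=\; b \# 1.
\]
Since each $x_j \in (f_G)$, this exhibits $a_0 a_1 \cdots a_{m-1} \# 1$ as an element of $(f_G)$, which is what I wanted. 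This argument is essentially computational; the only point requiring a bit of care is the bookkeeping that ensures the exponents $(m-1-j)k$ cover all residues mod $m$ so that Vandermonde/character orthogonality applies. I do not anticipate any substantive obstacle beyond that.
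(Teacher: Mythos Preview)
Your argument is correct: the identity $(p\#1)f_G(q\#1)=\sum_k p\,\sigma^k(q)\#\sigma^k$, the eigenvalue bookkeeping for $q_j$, and the final character-orthogonality average all go through exactly as written (using $\mathrm{char}\,\kk=0$ to divide by $m$). Note that the paper does not actually prove this lemma---it is quoted verbatim from \cite[Lemma~3.4]{HZ}---so there is no in-paper proof to compare against; your Vandermonde/orthogonality argument is the standard one and is essentially what one finds in the cited source.
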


We now apply Lemma \ref{lem.seq} to establish an isomorphism of the Auslander map for certain scalar automorphisms.

\begin{theorem}
\label{thm.scalar}
Let $\sigma \in F$ with $m=|\sigma|$, $1 < m < \infty$, and let $G=\langle \sigma \rangle$. As above, write $\sigma(\alpha_i)=\xi_i \alpha_i$ and $\sigma(\alpha_i^*)=\xi_i^* \alpha_i^*$, $i=0,\hdots,n-1$, with $\xi_1\xi_1^*=1$.
In each of the following cases, $\eta_{R,G}$ is an isomorphism.
\begin{enumerate}
    \item \label{sc1} There is some primitive $m$th root of unity $\zeta$ such that $\xi_i=\zeta$ for $i=0,\hdots,n-1$.
    \item \label{sc2} There is some primitive $m$th root of unity $\zeta$ such that $\xi_0\xi_1\cdots\xi_{n-1}=\zeta$.
    \item \label{sc3} For all $i,j=0,\hdots,n-1$ with $i\neq j$, we have $\gcd(|\xi_i|,|\xi_j|)=1$.
\end{enumerate}
\end{theorem}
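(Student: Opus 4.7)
The plan is to reduce via Theorem \ref{thm.aus}: we need $\dim_\kk(R') < \infty$, and the remark following that theorem says it suffices to produce an integer $M$ with the property that every monomial of length at least $M$ lies in $(f_G)$. The Structure Lemma \ref{lem.structure} puts every nonzero monomial into the canonical form $\beta_1 \cdots \beta_\ell \gamma_1 \cdots \gamma_k$ with $\beta_i$ nonstar and $\gamma_j$ star, so I only need to control $\ell$ and $k$ separately. The main tool is Lemma \ref{lem.seq}: identifying inside the monomial a subword that factors as $a_0 a_1 \cdots a_{m-1}$ for $\sigma$-eigenvectors $a_i$ sharing a common primitive $m$th-root eigenvalue puts that subword, and hence the whole monomial, into the two-sided ideal $(f_G)$.

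For case (1), every $\beta_i$ is a $\sigma$-eigenvector with eigenvalue $\zeta$, and since $\xi_i \xi_i^* = 1$, every $\gamma_j$ is one with eigenvalue $\zeta^{-1}$; both are primitive $m$th roots of unity. If $\ell \geq m$ the subword $\beta_1 \beta_2 \cdots \beta_m$ satisfies the hypothesis of Lemma \ref{lem.seq} with eigenvalue $\zeta$, and if $k \geq m$ the same holds for $\gamma_1 \cdots \gamma_m$ with eigenvalue $\zeta^{-1}$, so $M = 2m - 1$ works. For case (2), I would group nonstar arrows into full cycles: the $n$ consecutive nonstar arrows $\alpha_j \alpha_{j+1} \cdots \alpha_{j+n-1}$ (indices mod $n$) form a cycle with eigenvalue $\xi_j \cdots \xi_{j+n-1} = \xi_0 \cdots \xi_{n-1} = \zeta$, independent of the starting index $j$. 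A nonstar subword of length $nm$ therefore decomposes into $m$ such cycles, each of eigenvalue $\zeta$, and Lemma \ref{lem.seq} applies. Analogously, $n$ consecutive star arrows form a cycle with eigenvalue $(\xi_0 \cdots \xi_{n-1})^{-1} = \zeta^{-1}$, which is also a primitive $m$th root of unity, so $k \geq nm$ suffices; taking $M = 2nm - 1$ finishes the case.

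For case (3), I would show the hypothesis of case (2) holds and then invoke it. Writing $m_i = |\xi_i|$ and using $|\xi_i^*| = m_i$, one has $m = |\sigma| = \lcm(m_0, \ldots, m_{n-1})$, which under pairwise coprimality equals $m_0 m_1 \cdots m_{n-1}$. The elementary fact that a product of roots of unity with pairwise coprime orders has order equal to the product of those orders then gives $|\xi_0 \xi_1 \cdots \xi_{n-1}| = m$, so this product is a primitive $m$th root of unity and case (2) applies.

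I do not foresee a serious obstacle; the Structure Lemma removes the combinatorial complication of the quiver and Lemma \ref{lem.seq} supplies all the ideal-theoretic content. The one point needing a light verification is that in case (2) the eigenvalue of a length-$n$ cycle is independent of its starting vertex, but this is immediate since the cyclic product $\xi_0 \xi_1 \cdots \xi_{n-1}$ is invariant under cyclic permutation of its factors.
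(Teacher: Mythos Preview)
Your proposal is correct and follows essentially the same approach as the paper: both reduce to Theorem~\ref{thm.aus} via the Structure Lemma and then use Lemma~\ref{lem.seq} applied to products of $\sigma$-eigenvectors with a common primitive $m$th-root eigenvalue, with case~(3) reduced to case~(2) by the coprimality argument. The only cosmetic difference is that the paper first isolates one fixed pure nonstar path and one fixed pure star path in $(f_G)$ and then argues any long enough monomial contains them, whereas you apply Lemma~\ref{lem.seq} directly to a subword of the given monomial; the content is the same.
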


\begin{proof}
First, suppose there exists a pure nonstar path $q$ of length $\ell$ such that $q\# 1 \in (f_G)$. Then let $p$ be a path containing at least $2\ell$ nonstar arrows. Pushing all nonstar arrows to the left using the Structure Lemma (Lemma \ref{lem.structure}), it follows that $p$ contains $q$, so $p\# 1 \in (f_G)$. The same argument applies if $q,p$ are pure star paths. Hence, if $(f_G)$ contains both a pure nonstar path and a pure star path of length $\ell$, then $\dim_{\kk}(R')<\infty$ and $\eta_{R,G}$ is an isomorphism by Theorem \ref{thm.aus}. Thus, in each case we will attempt to produce such paths.

(1) Assuming $\xi_i=\zeta$ as in the hypothesis, take $a_i=\alpha_i$ and apply Lemma \ref{lem.seq}. Taking indices mod $n$, it follows that $a_0\cdots a_{m-1} \in (f_G)$ is a pure nonstar path. One similarly obtains a pure star path. Hence, $\eta_{R,G}$ is an isomorphism by the above argument.

(2) Let $p$ be any pure nonstar path of length $n$, and $q$ any pure star path of length $n$. Since $p$ contains each nonstar arrow exactly once, $\sigma(p)=\zeta p$. No power of $p$ is zero and so we apply Lemma \ref{lem.seq} with $a_i=p$ to obtain $p^m\# 1 \in (f_G)$. Similarly we obtain $q^m\#1 \in (f_G)$. Hence, $\eta_{R,G}$ is an isomorphism.

(3) The order of $\sigma$ is determined by its image on $R_1$ which in turn is determined by its image on $\alpha_0,\dots,\alpha_{n-1}$. That is, $|\sigma|=k$, where $k$ is the least positive integer such that $\sigma^k(\alpha_i)=\alpha_i$ for all $i=0,\dots,n-1$. Since the orders of the scalars $\xi_i$ are relatively prime, then we have
$|\zeta| = |\xi_0|\cdot|\xi_1|\cdots|\xi_{n-1}| = |\sigma|$.
The result now follows from \eqref{sc2}.
\end{proof}

\section{Dihedral actions on $\Pi_{\An}$}
\label{sec.dih}

In this section we establish our main theorem regarding the Auslander map for dihedral actions on $\Pi_{\An}$. 

Let $\sigma$ be a quiver automorphism of a schurian quiver $Q$. As discussed above, if $\alpha \in Q_1$, then $\sigma(\alpha)$ is a scalar multiple of the unique arrow from $\sigma_0(s(\alpha))$ to $\sigma_0(t(\alpha))$. Throughout this section, we assume that scalar is 1.

\begin{proposition}
\label{prop.lift}
Let $Q$ be a quiver such that $\overline{Q}$ is schurian, let $R=\Pi_Q$, and let $\sigma\in\Aut_{\gr}(\kk \overline{Q})$ be induced from a quiver automorphism of $\overline{Q}$ as above. If one of the following hold, then $\sigma\in\Aut_{\gr}(R)$:
\begin{enumerate}
    \item $\sigma$ is \emph{star-preserving}: $\sigma(Q_1)=Q_1$ and $\sigma(Q_1^*)=Q_1^*$;
    \item $\sigma$ is \emph{star-inverting}: $\sigma(Q_1)=Q_1^*$ and $\sigma(Q_1^*)=Q_1$.
\end{enumerate}
\end{proposition}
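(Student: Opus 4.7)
The plan is to show that $\sigma$ preserves the two-sided ideal $(\Omega)$ of $\kk\overline{Q}$, so it descends to an automorphism of the quotient $R=\kk\overline{Q}/(\Omega)$. Since $\sigma$ is an algebra automorphism, it suffices to verify $\sigma(\Omega)\in(\Omega)$; I expect to show the stronger statement $\sigma(\Omega) = \pm \Omega$, with sign $+$ in case (1) and $-$ in case (2).

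The first step is to pin down $\sigma(\alpha^*)$ for $\alpha\in Q_1$. Because $\sigma$ is a quiver automorphism of $\overline{Q}$, the arrow $\sigma(\alpha^*)$ must go from $\sigma(t(\alpha))$ to $\sigma(s(\alpha))$, and the schurian hypothesis on $\overline{Q}$ makes this arrow unique. In the star-preserving case, writing $\beta:=\sigma(\alpha)\in Q_1$, the unique such arrow is $\beta^*$, so $\sigma(\alpha^*)=\beta^*$. In the star-inverting case, writing $\sigma(\alpha)=\gamma^*$ for a unique $\gamma\in Q_1$ (so that $s(\gamma)=\sigma(t(\alpha))$ and $t(\gamma)=\sigma(s(\alpha))$), the unique arrow $s(\gamma)\to t(\gamma)$ in $\overline{Q}$ is $\gamma$ itself, giving $\sigma(\alpha^*)=\gamma$.

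Given this, I would compute term by term. In the star-preserving case, $\sigma(\alpha\alpha^*-\alpha^*\alpha)=\beta\beta^*-\beta^*\beta$, and summing over $\alpha\in Q_1$ (using that $\alpha\mapsto\beta$ is a bijection of $Q_1$) yields $\sigma(\Omega)=\Omega$. In the star-inverting case, $\sigma(\alpha\alpha^*-\alpha^*\alpha)=\gamma^*\gamma-\gamma\gamma^*=-(\gamma\gamma^*-\gamma^*\gamma)$; since $\alpha\mapsto\gamma$ is again a bijection of $Q_1$, summing gives $\sigma(\Omega)=-\Omega$. In both cases $\sigma(\Omega)\in(\Omega)$, as required. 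There is no real obstacle here --- the argument is formal once the images of $\alpha$ and $\alpha^*$ are identified. The one point of care is that the schurian hypothesis on $\overline{Q}$ (not merely on $Q$) is what forces these images to be unique, and in the star-inverting case one should verify that $\alpha\mapsto\gamma$ is genuinely a bijection of $Q_1$, which follows because $\sigma$ restricts to a bijection $Q_1\to Q_1^*$ and starring is a bijection $Q_1^*\to Q_1$.
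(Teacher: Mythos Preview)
Your proposal is correct and follows essentially the same approach as the paper: both arguments use the schurian hypothesis on $\overline{Q}$ to pin down $\sigma(\alpha^*)$ in terms of $\sigma(\alpha)$, and then verify that $\sigma(\Omega)=\Omega$ in the star-preserving case and $\sigma(\Omega)=-\Omega$ in the star-inverting case. Your treatment of the star-inverting case is in fact slightly more explicit than the paper's, which simply asserts the argument is similar.
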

\begin{proof}
Given a nonstar arrow $\alpha$ with source $e_i$ and target $e_j$, $\alpha^*$ is the unique arrow with source $e_j$ and target $e_i$. In particular, this holds for $\sigma(\alpha)$, so in case 1 we must have $\sigma(\alpha^*)=\sigma(\alpha)^*$. Then $\sigma(\alpha\alpha^*)=\sigma(\alpha)\sigma(\alpha)^*$ and $\sigma(\alpha^* \alpha)=\sigma(\alpha)^*\sigma(\alpha)$,
so $\sigma$ permutes the summands of both $\sum_{\alpha \in Q_1} \alpha\alpha^*$ and $\sum_{\alpha \in Q_1} \alpha^* \alpha$. That is,
\[ \sigma\left(\Omega\right)=\sigma\left(\sum_{\alpha \in Q_1} \alpha\alpha^*\right) -\sigma\left(\sum_{\alpha \in Q_1} \alpha^* \alpha\right)= \sum \alpha\alpha^* - \sum \alpha^*\alpha = \Omega.\]
The argument is similar in case 2 except we obtain $\sigma(\Omega)=-\Omega$, so again it preserves the ideal $(\Omega)$.
\end{proof}

For the remainder of this section, let $Q=\An$ and $R=\Pi_{\An}$.  We will show that there is a group of quiver automorphisms of $\overline{Q}$ that is isomorphic to the dihedral group on $n$ vertices. We first identify two quiver automorphisms of $\overline{Q}$ which extend to automorphisms of $R$.
\begin{enumerate}
\item Define $\rho:\overline{Q}\to\overline{Q}$ by $\rho(e_i)=e_{i+1}$, where the index is taken mod $n$.
Then $\rho(\alpha_i)$ is the unique arrow with source $\rho(s(\alpha_i))=e_{i+1}$ and target $\rho(t(\alpha_i))=e_{i+2}$, which is $\alpha_{i+1}$. Consequently $\rho(\alpha_i^*)=\rho(\alpha_i)^*=\alpha_{i+1}^*$.
Thus $\rho$ is a star-preserving automorphism of $\overline{Q}$, and has order $n$.

\item Define $r:\overline{Q}\rightarrow\overline{Q}$ by $r(e_i)=e_{n-i}$. Since $r(s(\alpha_i))=e_{n-i}$ and $r(t(\alpha_i))=e_{n-i-1}$, we must have $r(\alpha_i)=\alpha_{n-i-1}^*$ and $r(\alpha_i^*)=\alpha_{n-i-1}$. Thus $r$ is a star-inverting automorphism of $\overline{Q}$ order $2$.
\end{enumerate}

By Proposition \ref{prop.lift}, $G=\langle \rho, r\rangle$ extends to a subgroup of $\Aut_{\gr}(R)$ where $R=\Pi_Q$. It is clear that $G \iso D_n$, and so we identify $D_n$ with the group $G$ acting on $R$ by graded automorphisms. 

\begin{theorem}
\label{thm.proper}
Let $G$ be a subgroup of $D_n$. If there exists a reflection $\tau \in D_n$ that fixes a vertex and $\tau \notin G$, then $\dim_\kk (R')<\infty$.
\end{theorem}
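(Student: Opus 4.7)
The plan is to exhibit a single primitive idempotent $e_k \in R$ lying in the ideal $I_G := \{r\in R : r\#1\in (f_G)\}$, and to recognize the quotient $R/Re_kR$ as a Dynkin preprojective algebra, which is finite-dimensional. Since $R'$ is a quotient of $R/Re_kR$, this suffices.

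To produce such an idempotent, I would take $e_k$ to be any vertex of $\An$ fixed by $\tau$. The actions $\rho(e_i)=e_{i+1}$ and $r(e_i)=e_{n-i}$ give $\operatorname{Stab}_{D_n}(e_j)=\{1,\, r\rho^{-2j}\}$ for every $j$, independent of the parity of $n$; that is, the identity together with the unique reflection of $D_n$ that fixes $e_j$. Since $\tau$ is precisely this reflection for $j=k$ and $\tau\notin G$, we obtain $\operatorname{Stab}_G(e_k)=G\cap\operatorname{Stab}_{D_n}(e_k)=\{1\}$. The key calculation in $R\# G$ is then
\[
  e_k\, f_G\, e_k \;=\; \sum_{g\in G} e_k\, g(e_k)\,\#\, g \;=\; \sum_{g\in G} e_k\, e_{g(k)}\,\#\, g \;=\; \sum_{g\in\operatorname{Stab}_G(e_k)} e_k\,\#\, g \;=\; e_k\,\#\, 1,
\]
using the orthogonality $e_k e_j=\delta_{kj}e_k$ of the primitive idempotents. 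Thus $e_k\#1\in (f_G)$, so $e_k\in I_G$. Since $I_G$ is a two-sided ideal of $R$, we conclude $Re_kR\subseteq I_G$.

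To finish, I would identify $R/Re_kR$. Killing $e_k$ in $R=\Pi_{\An}$ also kills the four incident arrows $\alpha_{k-1},\alpha_{k-1}^*,\alpha_k,\alpha_k^*$ in $\overline{\An}$; the preprojective relations $e_j\Omega e_j$ at the two neighbouring vertices $e_{k\pm 1}$ reduce to the standard boundary relations $\alpha_{k+1}\alpha_{k+1}^*=0$ and $\alpha_{k-2}^*\alpha_{k-2}=0$, while the relations at all other vertices are unchanged. The resulting quiver is the linear Dynkin quiver of type $A_{n-1}$, so $R/Re_kR\iso \Pi_{A_{n-1}}$, which is classically finite-dimensional. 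Alternatively, one can verify finite-dimensionality directly using the Structure Lemma (Lemma~\ref{lem.structure}): each canonical monomial $\beta_1\cdots\beta_\ell\gamma_1\cdots\gamma_m$ traverses a consecutive arc of $\max(\ell,m)+1$ vertices modulo $n$, so avoiding $e_k$ forces $\max(\ell,m)\leq n-2$, leaving only finitely many such monomials. Either way, $R'$ is a quotient of a finite-dimensional algebra and thus finite-dimensional.

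No serious obstacle arises in this approach. The one point that requires genuine care is the stabilizer computation in $D_n$, a short elementary check; after that, everything reduces to the one-line idempotent calculation above and a standard fact about preprojective algebras of Dynkin type.
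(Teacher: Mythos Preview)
Your proof is correct and follows essentially the same route as the paper: both hinge on the computation $e_k\,f_G\,e_k = e_k\#1$, valid precisely because the missing reflection $\tau$ is the unique nontrivial element of $D_n$ fixing $e_k$, and then use the Structure Lemma to see that every sufficiently long canonical path must pass through $e_k$. Your packaging of the last step as $R/Re_kR\cong\Pi_{A_{n-1}}$ is a nice conceptual gloss the paper does not make explicit, but the underlying argument is the same.
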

\begin{proof}
Let $\tau$ be the reflection that fixes $e_i$ and suppose $\tau \notin G$. Since $\tau$ is the only nontrivial element of $D_n$ that fixes $e_i$,
we have $e_i g(e_i)=0$ for all $g\in G\setminus\{1\}$. Consequently $e_i(f_G) e_i=e_i\# 1$. Let $p$ be a path of length at least $2n+1$, so $p$ contains at least $n+1$ nonstar arrows or at least $n+1$ star arrows. Without loss of generality, suppose $p$ has at least $n+1$ nonstar arrows. By the Structure Lemma (Lemma \ref{lem.structure}), we may push all star arrows to the right, so that 
\[ p=\alpha_j\alpha_{j+1}\cdots \alpha_{j+n-1}\alpha_{j+n}p'\]
for some path $p'$ and some $j=0,\dots,n-1$ where the indices are taken mod $n$. Then for some $0\leq k\leq n-1$, $i=j+k+1$ mod $n$, so
\[ p=(\alpha_j\cdots\alpha_{j+k}) e_i (\alpha_{j+k+1}\cdots\alpha_{j}p')  \]
Hence $p\# 1\in (f_G)$ and so  $q\# 1 \in (f_G)$ for all paths $q$ of length at least $2m+1$.
Thus, $\dim_\kk (R')<\infty.$
\end{proof}

Theorem \ref{thm.proper} shows that the Auslander map is an isomorphism for the pair $(R,G)$ so long as $G$ is missing a reflection which fixes some vertex. In case $n$ is odd, this includes all proper subgroups of $D_n$. However, in the case that $n$ is even, there is one additional subgroup, $W_n$. It remains to show that the Auslander map fails to be an isomorphism in the case of $W_n$ and the full dihedral group $D_n$.

\subsection{The $D_n$ case}

For $x \in R$, we denote by the $\cO(x)$ the orbit of $x$ under $D_n$. We begin by describing the orbits of $R$ under the $D_n$ action so as to find a $\kk$-basis of $R^{D_n}$.

Recall that for $k \geq 0$, we let $Q_k$ (resp. $Q_k^*$) denote the set of paths of length $k$ containing only nonstar (resp. star) arrows, and $Q_0=Q_0^*$ is the set of trivial paths. Further, let $Q_\ell Q_k^*$ denote the set of paths containing exactly $\ell$ nonstar arrows followed by $k$ star arrows. Then in the double quiver, we have 
\[ \overline{Q_\ell} = \bigcup_{\substack{i+j=\ell \\ i,j\geq 0}} Q_iQ_j^*.\]
Clearly, $\overline{Q_\ell}$ is a generating set for the graded piece $R_\ell$ of $R$. Finally, for $\ell \geq k \geq 0$, set $B_{\ell,k}=Q_\ell Q_k^*\cup Q_k Q_\ell^*$.

\begin{lemma}
\label{lem.Blk}
For any $p\in B_{\ell,k}$, $\displaystyle \cO(p)= B_{\ell,k}$. 
\end{lemma}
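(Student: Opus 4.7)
The plan is to analyze the action of the two generators $\rho$ and $r$ of $D_n$ on $B_{\ell,k}$ separately and then combine them. First, I would note that an element $p\in Q_\ell Q_k^*$ is determined by its source vertex $e_j$: the Structure Lemma forces $p$ to have the form
\[
p_j := \alpha_j\alpha_{j+1}\cdots\alpha_{j+\ell-1}\,\alpha_{j+\ell-1}^*\alpha_{j+\ell-2}^*\cdots\alpha_{j+\ell-k}^*,
\]
with indices taken mod $n$, since at each vertex there is a unique outgoing (resp.\ incoming) nonstar arrow. In particular $|Q_\ell Q_k^*|=n$, and likewise $|Q_k Q_\ell^*|=n$.

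Next, since $\rho(\alpha_i)=\alpha_{i+1}$ and $\rho(\alpha_i^*)=\alpha_{i+1}^*$, we have $\rho(p_j)=p_{j+1}$. Hence $\langle\rho\rangle$ acts transitively on the $n$-element set $Q_\ell Q_k^*$, and by the same reasoning on $Q_k Q_\ell^*$. So the $\langle\rho\rangle$-orbit of any $p\in B_{\ell,k}$ is one of these two sets.

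Then I would analyze $r$. Because $r(\alpha_i)=\alpha_{n-i-1}^*$ and $r(\alpha_i^*)=\alpha_{n-i-1}$, applying $r$ to $p_j\in Q_\ell Q_k^*$ turns each nonstar arrow into a star arrow and vice versa, producing a word that is $\ell$ star arrows followed by $k$ nonstar arrows. Using the computation in the proof of the Structure Lemma iteratively, namely $\alpha_c^*\alpha_c\alpha_{c+1}\cdots\alpha_{c+m}=\alpha_{c+1}\cdots\alpha_{c+m+1}\alpha_{c+m+1}^*$, one pushes every star past every nonstar; the rewrite is a ``clean'' binary substitution coming from the single preprojective relation at each vertex, so no extra terms appear. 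Tracking indices, this word reduces to a single monomial in $Q_k Q_\ell^*$. Thus $r$ sends $Q_\ell Q_k^*$ bijectively onto $Q_k Q_\ell^*$ (and vice versa, since $r^2=1$).

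Combining the two observations: for any $p\in B_{\ell,k}$, its orbit under $\langle\rho\rangle$ is whichever of $Q_\ell Q_k^*$ or $Q_k Q_\ell^*$ contains $p$, and then $r$ swaps the two pieces. Therefore $\mathcal{O}(p)=Q_\ell Q_k^*\cup Q_k Q_\ell^*=B_{\ell,k}$, as required. The only subtle step is the bookkeeping for $r$: one must verify that $r(p)$ really simplifies to a single element of $Q_k Q_\ell^*$ rather than a sum. This is the main obstacle, but it follows cleanly from the fact that the only rewrite available in $\Pi_{\An}$ (namely $\alpha_i\alpha_i^*=\alpha_{i-1}^*\alpha_{i-1}$) is a replacement rather than an expansion, so the reduction of $r(p_j)$ to canonical form is deterministic.
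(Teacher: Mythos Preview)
Your proof is correct and uses the same underlying observations as the paper: elements of $Q_\ell Q_k^*$ are determined by their source (via the Structure Lemma), rotations preserve each piece $Q_\ell Q_k^*$, and reflections swap the two pieces. The difference lies only in how you finish. You establish transitivity constructively, showing $\langle\rho\rangle$ already acts transitively on each piece and $r$ bridges them; the paper instead shows $\cO(x)\subseteq B_{\ell,k}$ and then computes $|\cO(x)|$ by the orbit--stabilizer theorem, splitting into the cases $\ell\neq k$ (trivial stabilizer, $|\cO(x)|=2n$) and $\ell=k$ (stabilizer of order~$2$, $|\cO(x)|=n$). Your approach has the minor advantage of handling both cases uniformly and making explicit why $r(p)$ lands in $Q_kQ_\ell^*$ (the rewrite is a one-to-one monomial substitution, not an expansion); the paper's counting argument is slightly slicker in that it never needs to track where exactly $r(p_j)$ lands, only that reflections swap the star/nonstar counts.
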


\begin{proof}
By Structure Lemma (Lemma \ref{lem.structure}), a path $p$ is uniquely determined by its source along with the number of nonstar and star arrows it contains. Consequently, each $p\in Q_\ell Q_k^*$ is uniquely determined by its source, as is each $q\in Q_k Q_\ell^*$. Thus for each $i=0,\dots,n-1$, let $p_i$ (resp. $q_i$) denote the unique path in $Q_\ell Q_k^*$ (resp. $Q_k Q_\ell^*$) with source $e_i$. Then $B_{\ell,k}=\{p_0,\dots,p_{n-1},q_0,\dots,q_{n-1}\}$.

Let $x\in B_{\ell,k}$ and $y\in\cO(x)$, so $y=g(x)$ for some $g\in D_n$. If $g$ is a rotation, then $g$ bijectively maps $Q_1$ to $Q_1$ and $Q_1^*$ to $Q_1^*$.
Consequently $g(x)$ has the same number of nonstar arrows as $x$, and the same number of star arrows as $x$. That is, if $x\in Q_\ell Q_k^*$, then $y\in Q_\ell Q_k^*$. Thus $y\in B_{\ell,k}$.
If $g$ is a reflection, then $g$ bijectively maps $Q_1$ to $Q_1^*$ and $Q_1^*$ to $Q_1$.
Hence $g(x)$ has the same number of nonstar arrows as $x$ has star arrows, and the same number of star arrows as $x$ has nonstar arrows. That is, if $x\in Q_\ell Q_k^*$, then $y\in Q_k Q_\ell^*$.
Once again $y\in B_{\ell,k}$, so $\cO(x)\subseteq B_{\ell,k}.$

We have $|\cO(x)|=|D_n|/|\stab(x)|,$ and $g\in\stab(x)$ only if $g$ fixes the source of $x$.
Hence $g$ is the identity or the unique reflection $r$ fixing $s(x)$.
Now, if $\ell\neq k$, then $Q_\ell Q_k^*\neq Q_k Q_\ell^*$ so $|B_{\ell,k}|=2n$, and $r$ inverts the number of star and nonstar arrows, so $r(x)\neq x$.
Consequently $\stab(x)=\{1\}$, so $|\cO(x)|=|G|=2n$.
Thus $\cO(x) = B_{\ell,k}$.
If on the other hand $\ell=k,$ then $B_{\ell,k}=B_{\ell,\ell}=Q_\ell Q_\ell^*$, and $r(x)=x$ so $\stab(x)=\{1,r\}$.
In this case we have $|B_{\ell,k}|=n=|D_n|/|\stab(x)|=|\cO(x)|$, so $\cO(x)=B_{\ell,k}$.
\end{proof}

Set \[\OO(\ell,k) = \sum_{p \in B_{\ell,k}} p.\] By Lemma \ref{lem.Blk}, these are exactly the orbit sums of homogeneous elements in $R$, and hence form a $\kk$-basis for $R^{D_n}$. This shows that $R^{D_n}$ has Hilbert series 
\begin{align*}
H_{R^{D_n}}(t) 
	&= 1 + t + 2t^2 + 2t^3 + 3t^4 + 3t^5 + \cdots \\
	&= (1+t)\sum_{k=0}^\infty (k+1) (t^2)^k
	= \frac{1}{(1-t)(1-t^2)}.
\end{align*}

\begin{lemma}
\label{lem.osums}
The orbit sums $\OO(\ell,k)$ satisfy the following relations:
\begin{align}
\label{eq.O1} 
\OO(1,0)\OO(\ell,k) &= 
\begin{cases}
	\OO(\ell+1,k) + \OO(\ell,k+1) & \text{ if $\ell > k$ } \\
	\OO(\ell+1,k) & \text{ if $\ell=k$}
\end{cases} \\
\label{eq.O2} \OO(1,1)^m &= \OO(m,m).
\end{align}
\end{lemma}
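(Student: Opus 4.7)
The plan is to prove both identities by direct expansion, using the canonical form for monomials provided by the Structure Lemma (Lemma~\ref{lem.structure}) and repeated application of the preprojective relation $\alpha_j^*\alpha_j=\alpha_{j+1}\alpha_{j+1}^*$. For \eqref{eq.O1}, I would begin by writing $\OO(1,0)=\sum_i\alpha_i+\sum_i\alpha_i^*$, and for $\ell>k$ decomposing $\OO(\ell,k)=\sum_i p_i+\sum_i q_i$, where $p_i\in Q_\ell Q_k^*$ and $q_i\in Q_k Q_\ell^*$ are the unique paths with source $e_i$ (given by the Structure Lemma). The product $\OO(1,0)\OO(\ell,k)$ then expands as four double sums indexed by (arrow type) $\times$ (path type), and in each the composability constraint $t(a)=s(p)$ pins down at most one arrow per summand.

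Each of the four families reduces cleanly. The nonstar-prefix product $\alpha_{i-1}p_i$ is already in canonical form, producing the unique path in $Q_{\ell+1}Q_k^*$ with source $e_{i-1}$, so summing over $i$ yields $\sum_{Q_{\ell+1}Q_k^*}p$. The star-prefix product $\alpha_i^*p_i$ begins with $\alpha_i^*\alpha_i$, and iterating the preprojective relation exactly as in the proof of the Structure Lemma shuttles the leading star past all $\ell$ nonstars of $p_i$, yielding the unique path in $Q_\ell Q_{k+1}^*$ with source $e_{i+1}$. The analogous calculations with $q_i$ produce the unique paths in $Q_{k+1}Q_\ell^*$ and $Q_k Q_{\ell+1}^*$ respectively. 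Reassembling, the contributions landing in $Q_{\ell+1}Q_k^*\cup Q_k Q_{\ell+1}^*=B_{\ell+1,k}$ give $\OO(\ell+1,k)$, and those in $Q_\ell Q_{k+1}^*\cup Q_{k+1}Q_\ell^*=B_{\ell,k+1}$ give $\OO(\ell,k+1)$. The diagonal case $\ell=k$ is simpler, since $Q_\ell Q_k^*$ and $Q_k Q_\ell^*$ coincide, so $\OO(\ell,\ell)=\sum_i P_i$ for a single family $P_i\in Q_\ell Q_\ell^*$, and the same calculation contributes only to $B_{\ell+1,\ell}$, giving $\OO(\ell+1,\ell)$.

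For \eqref{eq.O2} I would induct on $m$. The base case $m=1$ is tautological. For the inductive step, write $\OO(m,m)=\sum_i P_i$ with $P_i=\alpha_i\alpha_{i+1}\cdots\alpha_{i+m-1}\alpha_{i+m-1}^*\cdots\alpha_i^*$, and expand $\OO(1,1)\OO(m,m)=\sum_{i,j}\alpha_j\alpha_j^*P_i$; the constraint $t(\alpha_j^*)=s(P_i)$ forces $j=i$, reducing the sum to $\sum_i\alpha_i\alpha_i^*P_i$. A single application of $\alpha_i^*\alpha_i=\alpha_{i+1}\alpha_{i+1}^*$ converts $\alpha_i\alpha_i^*P_i$ into $\alpha_i\alpha_{i+1}\alpha_{i+1}^*\alpha_{i+1}\alpha_{i+2}\cdots$, and iterating the same move $m$ times pushes the interior star past every remaining nonstar, producing the unique path in $Q_{m+1}Q_{m+1}^*$ with source $e_i$. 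Summing over $i$ identifies the result with $\OO(m+1,m+1)$, closing the induction.

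The main technical obstacle is the bookkeeping at the boundary $\ell=k+1$ in \eqref{eq.O1}, where $Q_\ell Q_{k+1}^*$ and $Q_{k+1}Q_\ell^*$ both collapse to $Q_\ell Q_\ell^*$. In that regime the star-prefix contribution from $p_i$ and the nonstar-prefix contribution from $q_i$ both sweep out the full set $Q_\ell Q_\ell^*$, so one must check carefully how these two families interact with the single-copy convention $\OO(\ell,\ell)=\sum_{Q_\ell Q_\ell^*}p$; everything else in the argument amounts to tracking source indices modulo $n$ and invoking the preprojective relation the correct number of times.
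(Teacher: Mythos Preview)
Your approach is exactly the paper's: direct expansion of the product into four cross-terms when $\ell>k$ (two when $\ell=k$), each reduced to canonical form via the preprojective relation, and the same induction for \eqref{eq.O2}.

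Your flagged concern at the boundary $\ell=k+1$ is not just bookkeeping---it is a genuine issue with the statement. In that case the star-prefix contribution from the $p_i$ and the nonstar-prefix contribution from the $q_i$ each sweep out all of $Q_\ell Q_\ell^*$, so together they contribute $2\,\OO(\ell,\ell)$ rather than $\OO(\ell,\ell)$; the correct identity there is
\[
\OO(1,0)\,\OO(\ell,\ell-1)=\OO(\ell+1,\ell-1)+2\,\OO(\ell,\ell).
\]
The paper's own computation $s_1^2=s_2+2\,\OO(1,1)$ in the proof of Lemma~\ref{lem.gen} confirms this (take $\ell=1$, $k=0$). The extra factor of $2$ is harmless for the downstream application in Lemma~\ref{lem.gen}, since one only needs that $\OO(\ell+1,k)$ lies in the subalgebra generated by lower orbit sums, but the identity \eqref{eq.O1} as literally written does require this correction when $\ell=k+1$.
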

\begin{proof}
To prove \eqref{eq.O1}, we suppose that $\ell>k$ and then
\begin{align*}
&\OO(1,0)\OO(\ell,k) \\ 
&= \left( \sum_{i=0}^{n-1} \alpha_i + \alpha_i^* \right)
	\left( \sum_{i=0}^{n-1} \alpha_i\cdots \alpha_{i+\ell-1}a_{i+\ell-1}^* \cdots \alpha_{i+\ell-k}^*  
	+ \alpha_i\cdots\alpha_{i+k-1}\alpha_{i+k-1}^* \cdots\alpha_{i+k-\ell}^* \right) \\
&= \sum_{i=0}^{n-1} \alpha_i\cdots \alpha_{i+\ell}a_{i+\ell}^* \cdots \alpha_{i+\ell-k+1}^*  
	+ \alpha_i\cdots\alpha_{i+k}\alpha_{i+k}^* \cdots\alpha_{i+k-(\ell-1)}^* \\
	&\qquad+ \alpha_i^*\alpha_i\cdots \alpha_{i+\ell-1}a_{i+\ell-1}^* \cdots \alpha_{i+\ell-k}^*  
	+ \alpha_i^*\alpha_i\cdots\alpha_{i+k-1}\alpha_{i+k-1}^*\cdots\alpha_{i+k-\ell}^* \\
&= \sum_{i=0}^{n-1} \alpha_i\cdots \alpha_{i+\ell}a_{i+\ell}^* \cdots \alpha_{i+(\ell+1)-k}^*  
	+ \alpha_i\cdots\alpha_{i+k}\alpha_{i+k}^* \cdots\alpha_{i+(k+1)-\ell}^* \\
	&\qquad+ \alpha_i\cdots \alpha_{i+\ell-1}a_{i+\ell-1}^* \cdots \alpha_{i+(\ell+1)-k}^*  
	+ \alpha_i\cdots\alpha_{i+k-1}\alpha_{i+k-1}^*\cdots\alpha_{i+(k+1)-\ell}^* \\
&= \OO(\ell+1,k) + \OO(\ell,k+1).
\end{align*}

On the other hand, if $\ell=k$, then
\begin{align*}
\OO(1,0)&\OO(\ell,\ell) 
= \left( \sum_{i=0}^{n-1} \alpha_i + \alpha_i^* \right)
	\left( \sum_{i=0}^{n-1} \alpha_i\cdots \alpha_{i+\ell-1}a_{i+\ell-1}^* \cdots \alpha_i^* \right) \\
&= \sum_{i=0}^{n-1} \alpha_i\cdots \alpha_{i+\ell}a_{i+\ell}^*\cdots \alpha_{i+1}^*
	+ \alpha_i^*\alpha_i\cdots \alpha_{i+\ell-1}a_{i+\ell-1}^* \cdots \alpha_i^* \\
&= \sum_{i=0}^{n-1} \alpha_i\cdots \alpha_{i+\ell}a_{i+\ell}^*\cdots \alpha_{i+1}^*
	+ \alpha_{i+1}\cdots \alpha_{i+\ell}\alpha_{i+\ell}^*\cdots \alpha_i^* \\
&= \OO(\ell+1,\ell).
\end{align*}

For \eqref{eq.O2}, the result is obvious if $m=1$. Suppose it holds for some $m$, then
\begin{align*}
\OO(1,1)^{m+1} &= \OO(1,1)\OO(m,m) \\
	&= \left( \sum_{i=0}^{n-1} \alpha_i\alpha_i^* \right) \left( \sum_{i=0}^{n-1} \alpha_i \cdots \alpha_{i+m-1}\alpha_{i+m-1}^*\alpha_{i+m-2}^* \cdots \alpha_i^* \right) \\
	&= \sum_{i=0}^{n-1} \alpha_i\alpha_i^*\alpha_i \cdots \alpha_{i+m-1}\alpha_{i+m-1}^* \cdots \alpha_i^* \\
	&= \sum_{i=0}^{n-1} \alpha_i \cdots \alpha_{i+m}\alpha_{i+m}^*\cdots \alpha_i^* \\
	&= \OO(m+1,m+1).
\end{align*}
The result now follows by induction.
\end{proof}

Set $s_0=\OO(0,0) = 1$, $s_1 = \OO(1,0)$, and $s_2 = \OO(2,0)$. We claim that $R^{D_n} = \kk[s_1,s_2]$.

\begin{lemma}
The orbit sums $s_1$ and $s_2$ commute.
\end{lemma}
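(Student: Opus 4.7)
The plan is to compute both $s_1 s_2$ and $s_2 s_1$ as explicit linear combinations of the orbit-sum basis of $R^{D_n}_3$ and observe that they agree. By equation \eqref{eq.O1} of Lemma \ref{lem.osums}, applied with $(\ell,k) = (2,0)$, we immediately obtain
\[ s_1 s_2 \;=\; \OO(1,0)\,\OO(2,0) \;=\; \OO(3,0) + \OO(2,1). \]
Since invariants form a subring of $R$, the product $s_2 s_1$ also lies in $R^{D_n}_3$; by Lemma \ref{lem.Blk} this space has basis $\{\OO(3,0),\OO(2,1)\}$, so $s_2 s_1 = a\,\OO(3,0) + b\,\OO(2,1)$ for some scalars $a,b\in\kk$, and it suffices to show $a = b = 1$.

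To determine the coefficients I would expand $s_2 s_1 = \sum_{i,j}(\alpha_i\alpha_{i+1} + \alpha_i^*\alpha_{i-1}^*)(\alpha_j + \alpha_j^*)$. Source/target matching forces $j$ in terms of $i$ in each of the four nonzero product types, producing, as $i$ ranges over $\ZZ/n$, the summands $\alpha_i\alpha_{i+1}\alpha_{i+2}$, $\alpha_i\alpha_{i+1}\alpha_{i+1}^*$, $\alpha_i^*\alpha_{i-1}^*\alpha_{i-1}$, and $\alpha_i^*\alpha_{i-1}^*\alpha_{i-2}^*$. Three of these are already in the nonstar-then-star canonical form of the Structure Lemma (Lemma \ref{lem.structure}), and each sweeps out the basis of $Q_3 Q_0^*$, $Q_2 Q_1^*$, and $Q_0 Q_3^*$ exactly once. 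The remaining summand $\alpha_i^*\alpha_{i-1}^*\alpha_{i-1}$ is not canonical; two applications of the preprojective relation \eqref{eq.relns}, first at $e_i$ to rewrite $\alpha_{i-1}^*\alpha_{i-1}$ as $\alpha_i\alpha_i^*$ and then at $e_{i+1}$ to rewrite $\alpha_i^*\alpha_i$ as $\alpha_{i+1}\alpha_{i+1}^*$, turn it into $\alpha_{i+1}\alpha_{i+1}^*\alpha_i^* \in Q_1 Q_2^*$, and as $i$ varies these hit each element of $Q_1 Q_2^*$ exactly once. Summing the four contributions gives $s_2 s_1 = \OO(3,0) + \OO(2,1) = s_1 s_2$.

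The only substantive step is the reduction of the fourth summand via \eqref{eq.relns}; the main obstacle I anticipate is the index bookkeeping, namely verifying that the reduced forms of $\alpha_i^*\alpha_{i-1}^*\alpha_{i-1}$ populate $Q_1 Q_2^*$ and do not collide with the canonical contributions to $Q_2 Q_1^*$, so that no basis element is double-counted. Once this disjointness is observed, the count collapses cleanly to $\OO(3,0)+\OO(2,1)$ on both sides.
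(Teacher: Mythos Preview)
Your argument is correct and rests on the same computation as the paper: expand the product and straighten the non-canonical summands using the preprojective relation \eqref{eq.relns}. The organization differs slightly. The paper computes $s_1 s_2$ directly as a chain of equalities, massaging the cross terms $\alpha_i\alpha_i^*\alpha_{i-1}^*$ and $\alpha_i^*\alpha_i\alpha_{i+1}$ until the expression visibly factors as $s_2 s_1$, never naming the intermediate orbit sums. You instead reduce both sides to the common basis $\{\OO(3,0),\OO(2,1)\}$ of $R^{D_n}_3$, and you save half the work by invoking \eqref{eq.O1} for $s_1 s_2$ rather than expanding it. This is a mild but genuine efficiency gain; the trade-off is that the paper's presentation is self-contained at the level of arrows, while yours depends on Lemma~\ref{lem.osums} and the basis statement from Lemma~\ref{lem.Blk}. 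Your anticipated ``collision'' issue is a non-issue: the reduced term $\alpha_{i+1}\alpha_{i+1}^*\alpha_i^*$ lands in $Q_1 Q_2^*$, disjoint from the $Q_2 Q_1^*$ contribution $\alpha_i\alpha_{i+1}\alpha_{i+1}^*$, so the four families populate $Q_3$, $Q_2Q_1^*$, $Q_1Q_2^*$, and $Q_3^*$ once each.
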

\begin{proof}
We recall first that for every arrow $\alpha$ there is exactly one nonstar arrow $\beta$ and one star arrow $\gamma$ such that $\alpha\beta \neq 0$ and $\alpha\gamma \neq 0$. Using this fact and the preprojective relation we have,
\begin{align*}
s_1s_2 
	&= \left( \sum_{i=0}^{n-1} \alpha_i + \alpha_i^* \right) \left( \sum_{i=0}^{n-1} \alpha_i \alpha_{i+1} + \alpha_i^* \alpha_{i-1}^* \right) \\
	&= \sum_{i=0}^{n-1} \alpha_i\alpha_{i+1}\alpha_{i+2} + \alpha_i\alpha_i^* \alpha_{i-1}^* 
		+ \alpha_i^*\alpha_i\alpha_{i+1} + \alpha_i^*\alpha_{i-1}^*\alpha_{i-2}^* \\
	&= \sum_{i=0}^{n-1} \alpha_i\alpha_{i+1}\alpha_{i+2} + \alpha_{i-1}^*\alpha_{i-1} \alpha_{i-1}^* 
		+ \alpha_{i+1}\alpha_{i+1}^*\alpha_{i+1} + \alpha_i^*\alpha_{i-1}^*\alpha_{i-2}^* \\
	&= \sum_{i=0}^{n-1} \alpha_i\alpha_{i+1}\alpha_{i+2} + \alpha_{i-1}^*\alpha_{i-2}^* \alpha_{i-2} 
		+ \alpha_{i+1}\alpha_{i+2}\alpha_{i+2}^* + \alpha_i^*\alpha_{i-1}^*\alpha_{i-2}^* \\
	&= \left( \sum_{i=0}^{n-1} \alpha_i \alpha_{i+1} + \alpha_i^* \alpha_{i-1}^* \right)\left( \sum_{i=0}^{n-1} \alpha_i + \alpha_i^* \right)
	= s_2s_1. \qedhere
\end{align*}
\end{proof}

\begin{lemma}
\label{lem.gen}
For all $\ell \geq k \geq 0$, $\displaystyle \OO(\ell,k) \in \kk[s_1,s_2]$.
\end{lemma}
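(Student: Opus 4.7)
The plan is to prove this by a double induction: outer on the total degree $d = \ell + k$ and inner on the ``width'' $j = \ell - k \geq 0$. The two base cases of the inner induction, $j=0$ and $j=1$, correspond to the diagonal and near-diagonal orbit sums, and both can be handled directly from the relations in Lemma \ref{lem.osums}.

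For the $j=0$ base, I first apply relation \eqref{eq.O1} to $\OO(1,0)$ itself; since $\ell = 1 > 0 = k$,
\[ s_1^2 = \OO(1,0)\,\OO(1,0) = \OO(2,0) + \OO(1,1) = s_2 + \OO(1,1),\]
which yields $\OO(1,1) = s_1^2 - s_2 \in \kk[s_1, s_2]$. Combined with \eqref{eq.O2}, this gives the closed form $\OO(m,m) = (s_1^2 - s_2)^m$ for all $m \geq 0$. For the $j=1$ base, the $\ell = k$ case of \eqref{eq.O1} yields $\OO(m+1,m) = s_1 \OO(m,m) = s_1 (s_1^2 - s_2)^m$, which also lies in $\kk[s_1, s_2]$.

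For the inductive step with $j \geq 2$, apply the first case of \eqref{eq.O1} to $\OO(\ell-1, k)$, which is legal since $\ell - 1 > k$:
\[ \OO(\ell, k) = s_1\, \OO(\ell-1, k) - \OO(\ell-1, k+1).\]
Here $\OO(\ell-1, k)$ has total degree $d-1$, so it lies in $\kk[s_1, s_2]$ by the outer induction, while $\OO(\ell-1, k+1)$ has the same total degree $d$ but smaller width $j-2 \geq 0$, so it lies in $\kk[s_1, s_2]$ by the inner induction. This completes the step.

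The inductive structure is essentially forced by the shape of \eqref{eq.O1}: that identity relates three orbit sums at two consecutive total degrees, so solving for the ``larger-width'' element at the higher total degree requires both a smaller-degree input and a smaller-width input at the same degree. The only subtle point, and the main obstacle to a naive induction on $d$ alone, is that the $\ell = k$ special case of \eqref{eq.O1} only produces $\OO(k+1, k)$ and never moves further off the diagonal; this is precisely why the diagonal and near-diagonal cases must be established separately as base cases of the inner induction before the general step applies.
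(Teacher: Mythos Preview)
Your proof is correct and follows essentially the same strategy as the paper: establish the diagonal case $\OO(m,m)$ via \eqref{eq.O2} once $\OO(1,1)$ is in hand, then use \eqref{eq.O1} to step outward. Your double induction on total degree $d$ and width $j=\ell-k$ is in fact a slightly tidier packaging of the paper's argument, which instead splits into even and odd $d$ and runs a secondary induction within each case; the content is the same.

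One small point worth flagging: your opening computation $s_1^2 = s_2 + \OO(1,1)$, obtained by plugging $(\ell,k)=(1,0)$ into \eqref{eq.O1}, disagrees with the paper's own direct calculation $s_1^2 = s_2 + 2\,\OO(1,1)$ at the start of its proof. The issue is an edge case in \eqref{eq.O1}: when $\ell=k+1$, the two halves $Q_\ell Q_{k+1}^*$ and $Q_{k+1} Q_\ell^*$ of $B_{\ell,k+1}$ coincide, so the term $\OO(\ell,k+1)=\OO(\ell,\ell)$ actually acquires a factor of $2$. The paper sidesteps this by computing $s_1^2$ from scratch rather than citing \eqref{eq.O1}. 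This does not affect your containment conclusions anywhere (either way $\OO(1,1)\in\kk[s_1,s_2]$, and the inductive step only uses that the right-hand side lies in $\kk[s_1,s_2]$), but your explicit closed form $\OO(m,m)=(s_1^2-s_2)^m$ is off by a scalar and should read $\bigl(\tfrac{1}{2}(s_1^2-s_2)\bigr)^m$.
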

\begin{proof}
We already have $\OO(0,0), \OO(1,0), \OO(2,0) \in \kk[s_1,s_2]$. Then
\begin{align*}
s_1^2 &= \left( \sum_{i=0}^{n-1} \alpha_i + \alpha_i^* \right)\left( \sum_{i=0}^{n-1} \alpha_i + \alpha_i^* \right) \\
	&= \left( \sum_{i=0}^{n-1} \alpha_i\alpha_{i+1} + \alpha_i^*\alpha_{i-1}^* \right) 
		+ \left( \sum_{i=0}^{n-1} \alpha_i\alpha_i^* + \alpha_i^*\alpha_i \right) \\
	&= s_2 + 2\OO(1,1).
\end{align*}
Hence, $\OO(1,1) \in \kk[s_1,s_2]$. Suppose inductively that $\OO(\ell,k) \in \kk[s_1,s_2]$ for all $\ell,k$ with $\ell \geq k \geq 0$ and $\ell+k \leq d$ for some $d \geq 2$. First assume that $d$ is even, so that $\OO(\frac{d}{2},\frac{d}{2}) \in \kk[s_1,s_2]$. Then by \eqref{eq.O1}, $\OO(\frac{d}{2}+1,\frac{d}{2})=\OO(1,0)\OO(\frac{d}{2},\frac{d}{2})\in \kk[s_1,s_2]$. Further, since $\OO(\frac{d}{2}+1,\frac{d}{2}-1) \in \kk[s_1,s_2]$, then
\[ \OO\left(\frac{d}{2}+2,\frac{d}{2}-1\right) =
\OO(1,0)\OO\left(\frac{d}{2}+1,\frac{d}{2}-1\right) - \OO\left(\frac{d}{2}+1,\frac{d}{2}\right).\]
By another induction, we have $\OO(\ell,k) \in \kk[s_1,s_2]$ with $\ell+k=d+1$.

Now assume $d$ is odd. Then $d+1$ is even and since $\OO(1,1) \in \kk[s_1,s_2]$, then by \eqref{eq.O2}, $\OO(1,1)^{(d+1)/2} = \OO(\frac{d+1}{2}, \frac{d+1}{2})$. Now the argument proceeds as in the even case.
\end{proof}

We now proceed to our main result for this section.

\begin{theorem}
\label{thm.Dn}
The Auslander map is not an isomorphism for the pair $(R,D_n)$.
\end{theorem}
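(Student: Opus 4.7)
The plan is to apply Theorem~\ref{thm.aus}: since $R$ satisfies its hypotheses, it suffices to show that the identity component $R'$ of $(R\#D_n)/(f_{D_n})$ is infinite-dimensional over $\kk$. The preceding lemmas identify $R^{D_n}$ with the polynomial ring $\kk[s_1,s_2]$, which is infinite-dimensional, so my approach is to show that the natural algebra homomorphism
\[ R^{D_n} \to R', \qquad r \mapsto r\#1 + (f_{D_n}), \]
is injective; this immediately yields $\dim_\kk R' = \infty$ and hence the failure of $\eta_{R,D_n}$ via Theorem~\ref{thm.aus}.

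For injectivity, suppose toward a contradiction that a nonzero $P \in R^{D_n}$ satisfies $P\#1 \in (f_{D_n})$. Writing $P\#1 = \sum_j (r_j\#1)\,f_{D_n}\,(s_j\#1)$ for some $r_j, s_j \in R$ and using the identity $(r\#1)\,f_{D_n}\,(s\#1) = \sum_{g\in D_n} r\,g(s)\#g$, we obtain
\begin{align*}
P\#1 = \sum_{g\in D_n}\Bigl(\sum_j r_j g(s_j)\Bigr)\#g.
\end{align*}
Comparing the $\#g$-components of both sides yields the system
\begin{align*}
\sum_j r_j s_j = P \qquad \text{and} \qquad \sum_j r_j g(s_j) = 0 \ \text{ for every } g \in D_n\setminus\{1\}.
\end{align*}

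The main obstacle is to derive $P = 0$ from this system. My planned approach exploits the $R^{D_n}$-module structure of $R$: comparing Hilbert series (using $H_{R^{D_n}}=1/((1-t)(1-t^2))$ as just computed) shows that $R$ is free of rank $2n$ over $R^{D_n}$, whence $\End_{R^{D_n}}(R)\cong M_{2n}(R^{D_n})$, and under the Auslander map $\eta$ the element $f_{D_n}$ becomes $|D_n|\cdot\pi$, where $\pi$ is the rank-one $R^{D_n}$-linear projection onto the summand $R^{D_n}\cdot 1 \subseteq R$. Translating the system through $\eta$ expresses left multiplication by $P$ (a central element of the matrix algebra) as a finite $R^{D_n}$-linear sum $\sum_j \phi_j \pi \psi_j$ with $\phi_j,\psi_j \in \eta(R\#D_n)$. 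I expect the hardest step to be ruling out such a factorization for nonzero $P$; the key point is that, since $D_n$ contains every vertex-fixing reflection $r_0,\dots,r_{n-1}$, the image $\eta(R\#D_n)$ omits enough entries of $M_{2n}(R^{D_n})$ (in contrast to the situation of Theorem~\ref{thm.proper}) to prevent a nonzero central element from lying in the two-sided $\eta(R\#D_n)$-ideal generated by $\pi$.
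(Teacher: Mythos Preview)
Your high-level strategy---show $\dim_\kk R' = \infty$ and invoke Theorem~\ref{thm.aus}---is sound, and exhibiting an injection $R^{D_n}\hookrightarrow R'$ would certainly do the job. But the proposed execution has two genuine problems.

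First, a factual error: for $P\in R^{D_n}$, left multiplication $\lambda_P$ is \emph{not} central in $\End_{R^{D_n}}(R)\cong M_{2n}(R^{D_n})$ unless $P$ is central in $R$ itself. For instance $s_1=\OO(1,0)$ satisfies $e_0 s_1=\alpha_0+\alpha_{n-1}^*\neq \alpha_{n-1}+\alpha_0^*=s_1 e_0$, so $\lambda_{s_1}$ is not a scalar matrix. The centrality heuristic you plan to exploit therefore does not apply.

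Second, and more seriously, the final step is circular. You want to conclude that $\lambda_P$ cannot lie in the two-sided $\eta(R\#D_n)$-ideal generated by $\pi$ because ``$\eta(R\#D_n)$ omits enough entries of $M_{2n}(R^{D_n})$''. But establishing that $\eta(R\#D_n)$ is a proper subalgebra of $M_{2n}(R^{D_n})$ is precisely the assertion that $\eta$ fails to be surjective, which by Theorem~\ref{thm.aus} is equivalent to $\dim_\kk R'=\infty$---the very thing you are trying to prove. (A smaller gap: matching Hilbert series alone does not yield freeness of $R$ over $R^{D_n}$; one still needs to produce generators and verify independence.)

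The paper's argument uses the same freeness of $R$ over $R^{D_n}$ on the basis $\{e_0,\dots,e_{n-1},\alpha_0,\dots,\alpha_{n-1}\}$ but then works directly with the codomain of $\eta$: since $\alpha_{n-1}R^{D_n}\cong e_0 R^{D_n}(-1)$ as graded $R^{D_n}$-modules, $\End_{R^{D_n}}(R)$ contains an endomorphism of degree $-1$. Because $\eta$ is graded and $R\#D_n$ is concentrated in non-negative degrees, $\eta$ cannot be surjective. This reaches the conclusion in one stroke, without analyzing $(f_{D_n})$ or $R'$ at all. If you wish to salvage your route, the cleanest fix is to replace the circular last paragraph by exactly this grading observation; once $\eta$ is known not to be surjective, Theorem~\ref{thm.aus} already gives $\dim_\kk R'=\infty$, and your desired injection follows.
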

\begin{proof}
Combining the previous two lemmas there is a surjective map $\kk[s_1,s_2] \to R^{D_n}$. Since both algebras have the same Hilbert series, then it follows that this map is an isomorphism. It now suffices to show that the set 
\[S=\{ e_0,\hdots,e_{n-1},\alpha_0,\hdots,\alpha_{n-1} \}\] 
is a basis for $R$ over $R^{D_n}$. 
That is, $R$ is a rank $2n$ free module over $R^{D_n}$. Then we have 
\[ R \iso \bigoplus_{i=0}^{n-1} \left( e_i R^{D_n} \oplus \alpha_i R^{D_n}\right)\]
as $R^{D_n}$-modules. Since $\alpha_{n-1} R^{D_n} \iso e_0 R^{D_n}(-1)$, then $\End_{R^{D_n}} R$ contains a map of negative degree and so the Auslander map is not an isomorphism for $(R,D_n)$.

First we show that the set $S$ generates $R$ as a $R^{D_n}$-module.
Clearly $R_0 \subset \Span_{R^{D_n}} S$. Moreover, for all $i=0,\hdots,n-1$,
$\alpha_{i-1}^* = e_i (s_1) - \alpha_i (1)$. Hence, $R_1 \subset \Span_{R^{D_n}} S$.

Note that there are exactly three paths of degree 2 for each vertex. Consider the degree 2 paths based at vertex $0$. We have
\begin{align*}
e_0\OO(1,1) = \alpha_0\alpha_0^*, \quad
e_0\OO(2,0) = \alpha_0\alpha_1 + \alpha_{n-1}^*\alpha_{n-2}^*, \quad
\alpha_0\OO(1,0) = \alpha_0\alpha_1 + \alpha_0\alpha_0^*.
\end{align*}
Hence, $\{ \alpha_0\alpha_0^*, \alpha_0\alpha_1, \alpha_{n-1}^*\alpha_{n-2}^* \} \subset \Span_{R^{D_n}} S$. A similar argument for the remaining vertices shows that $R_2 \subset \Span_{R^{D_n}} S$. 

In particular, the above argument shows that $R_2 = S_1R_1^{D_n} + S_0R_2^{D_n}$, which implies that 
$R_2 = R_1R_1^{D_n} + R_0R_2^{D_n}$. Multiplying by $R_1$ on the left gives $R_3 = R_2R_1^{D_n} + R_1R_2^{D_n}$ and by induction, $R_{m+1} = R_mR_1^{D_n} + R_{m-1}R_2^{D_n}$ for all $m$. Thus, $R$ is generated as a right $R^{D_n}$-module by $R_0$ and $R_1$. It follows that $R \subset \Span_{R^{D_n}} S$. That is, $S$ is a generating set for $R$ as an $R^{D_n}$ module.

For independence, we note that for every element of $S$ there is exactly one other element in $S$ with the same source. Hence, it suffices to prove that $e_i R^{D_n} \cap \alpha_i R^{D_n} = \{0\}$. We do this computation for $i=0$ and the other vertices follow similarly.

Suppose $a \in e_i R^{D_n} \cap \alpha_i R^{D_n}$. We may assume without loss of generality that $a$ is homogeneous of degree $d$. Suppose first that $d$ is even. Then there exist scalars $k_i,k_i' \in \kk$ such that
\begin{align*}
a &= e_0 \left( k_0 \OO(d,0) + k_1 \OO(d-1,1) + \cdots + k_{d/2}(d/2,d/2) \right) \\
a &= \alpha_0 \left( k_0' \OO(d-1,0) + k_1' \OO(d-2,1) + \cdots + k_{d/2-1}'(d/2,d/2-1) \right).
\end{align*}
From the second expression, we note that every path summand of $a$ must contain at least one non-starred arrow. Hence, $k_0=0$. But then from the first expression we note that every path summand of $a$ must contain at least one starred arrow, so $k_0'=0$. Continuing in this way, we see that $a=0$.
\end{proof}

\subsection{The $W_n$ case}

In case $n$ is odd, $\eta_{R,G}$ is an isomorphism if and only if $G$ is a proper subgroup of $D_n$. In case $n$ is even, there is one additional instance when $\eta_{R,G}$ fails to be an isomorphism, namely for the subgroup $W_n$ defined as:
\[ W_n = \langle \tau \in D_n : \tau(e_i)=e_i \text{ for some } i=0,\dots,n-1\rangle .\]
That is, $W_n$ is generated by the reflections in $D_n$ that pass through a vertex. If $n$ is odd, then every reflection fixes a vertex, so $W_n$ contains every reflection and $W_n=D_n$. If $n$ is even, only half of the reflections fix a vertex, so $W_n$ is a proper subgroup of $D_n$. Since $W_n$ is of index 2 in $D_n$, $W_n$ is a maximal subgroup of $D_n$.

Throughout this section we assume $n$ is even. Our strategy will be similar to the previous section. The key difference is that the invariant ring is no longer connected graded. In particular, there are exactly twice as many orbits in each graded piece as in the $D_n$ case.

As in the previous section, for $\ell \geq k \geq 0$, set $B_{\ell,k}=Q_\ell Q_k^*\cup Q_k Q_\ell^*$. Then define
\[
B_{\ell,k}^{\even} = e_i B_{\ell,k} \text{ for $i$ even} \quad\text{and}\quad
B_{\ell,k}^{\odd} = e_i B_{\ell,k} \text{ for $i$ odd}.
\]

\begin{lemma}
\label{lem.Blk2}
For any $p\in B_{\ell,k}^{\even}$ (resp. $p\in B_{\ell,k}^{\odd}$), $\displaystyle \cO(p)= B_{\ell,k}^{\even}$ (resp. $\cO(p) = B_{\ell,k}^{\odd}$). 
\end{lemma}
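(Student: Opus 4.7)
The plan is to parallel the proof of Lemma \ref{lem.Blk}, with the added bookkeeping of vertex parity. First I would identify $W_n$ explicitly. A reflection $\rho^k r \in D_n$ fixes a vertex $e_i$ iff $k \equiv 2i \pmod{n}$, which forces $k$ to be even when $n$ is even. Thus $W_n$ consists precisely of the $n/2$ even rotations $\rho^{2j}$ together with the $n/2$ reflections $\rho^{2j} r$ for $0 \leq j < n/2$, so $|W_n|=n$. Every such element sends $e_i$ either to $e_{i+2j}$ or to $e_{2j-i}$, both of which preserve the parity of the vertex index. Combined with Lemma \ref{lem.Blk}, which gives $\cO_{D_n}(p) = B_{\ell,k} \supseteq \cO_{W_n}(p)$, this shows that $\cO_{W_n}(p) \subseteq B_{\ell,k}^{\even}$ or $B_{\ell,k}^{\odd}$ according to the parity of $s(p)$.

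For the reverse containment I would use orbit-stabilizer. The only non-identity element of $W_n$ that fixes $e_i$ is the unique reflection $\rho^{2i \bmod n} r \in W_n$, so the stabilizer of $p$ is either trivial or of order $2$, exactly as in the proof of Lemma \ref{lem.Blk}. When $\ell \neq k$ this reflection is star-inverting and hence swaps $Q_\ell Q_k^*$ with $Q_k Q_\ell^*$, so it does not fix $p$; the stabilizer is trivial and $|\cO_{W_n}(p)| = n$. When $\ell = k$ the reflection preserves $Q_\ell Q_\ell^*$ and fixes $s(p)$, and then by the Structure Lemma there is a unique path in $Q_\ell Q_\ell^*$ with that source, so the reflection must fix $p$; the stabilizer has order $2$ and $|\cO_{W_n}(p)| = n/2$.

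Finally I would match these against direct counts of $|B_{\ell,k}^{\even}|$ and $|B_{\ell,k}^{\odd}|$. Since sources of paths in $B_{\ell,k}$ range uniformly over all $n$ vertices (and doubly when $\ell \neq k$, once each in $Q_\ell Q_k^*$ and $Q_k Q_\ell^*$), one finds $|B_{\ell,k}^{\even}| = |B_{\ell,k}^{\odd}| = n$ when $\ell \neq k$ and $=n/2$ when $\ell = k$. These match the computed orbit sizes, forcing the desired equality $\cO_{W_n}(p) = B_{\ell,k}^{\even}$ (resp.\ $B_{\ell,k}^{\odd}$). I do not expect a serious obstacle; the only delicate point is pinning down $W_n$ as an index-$2$ subgroup of $D_n$ and confirming that every vertex-fixing reflection actually lies in it, after which the argument is a direct parallel to the $D_n$ computation.
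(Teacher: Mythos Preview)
Your proposal is correct and follows essentially the same route as the paper's proof: show that $W_n$ preserves vertex parity (hence $\cO_{W_n}(p)\subseteq B_{\ell,k}^{\bullet}$), then use orbit--stabilizer exactly as in Lemma~\ref{lem.Blk} together with $|W_n|=\tfrac{1}{2}|D_n|$ and $|B_{\ell,k}^{\bullet}|=\tfrac{1}{2}|B_{\ell,k}|$ to force equality. The paper is simply terser, invoking the computation in Lemma~\ref{lem.Blk} wholesale rather than re-deriving the stabilizer cases and the explicit description of $W_n$ as you do.
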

\begin{proof}
This is similar to the proof of Lemma \ref{lem.Blk}. In particular, the $B_{\ell,k}$ partition the paths of $Q$. However, since $g \in W_n$ preserves the parity of the idempotents $e_i$, it follows that $g(B_{\ell,k}^{\even}) \subset B_{\ell,k}^{\even}$. Because $g$ is bijective then in fact we have equality.

It remains to show that we have an equivalence with the orbits. If $p \in B_{\ell,k}^{\even}$, then clearly $\cO(p) \subset B_{\ell,k}^{\even}$. Since $|B_{\ell,k}^{\even}|=\frac{1}{2} B_{\ell,k}$ and $|W_n|=\frac{1}{2} D_n$, then it follows from the argument in Lemma \ref{lem.Blk} that $|B_{\ell,k}^{\even}|=|\cO(p)|$. A similar argument applies to $|B_{\ell,k}^{\odd}|$.
\end{proof}

Set 
\[ \OO(\ell,k)^{\even} = \sum_{p \in B_{\ell,k}^{\even}} p
\quad\text{and}\quad
\OO(\ell,k)^{\odd} = \sum_{p \in B_{\ell,k}^{\odd}} p.\] 
These form a $\kk$-basis for $R^{W_n}$. Thus, $R^{W_n}$ has \emph{total} Hilbert series 
\[ H_{R^{W_n}}^{tot}(t) = \frac{2}{(1-t)(1-t^2)}.\] 
However, since $(R^{W_n})_0 = \kk^2$, then we can also record the matrix-valued Hilbert series. Let $M$ be the $2 \times 2$ matrix defined by
\begin{align*}
M_{0,0} &= \#\{ p \in B_{\ell,k}^{\even} \text{ with target $e_i$, $i$ even} \}, \\
M_{0,1} &= \#\{ p \in B_{\ell,k}^{\even} \text{ with target $e_i$, $i$ odd} \}, \\
M_{1,0} &= \#\{ p \in B_{\ell,k}^{\odd} \text{ with target $e_i$, $i$ even} \}, \\
M_{1,1} &= \#\{ p \in B_{\ell,k}^{\odd} \text{ with target $e_i$, $i$ odd} \}.
\end{align*}
Note that for $p \in B_{\ell,k}$, the parity of the target depends on the source and the parity of $\ell + k$. Hence, it follows that the matrix-valued Hilbert series of $R^{W_n}$ is
\begin{align*}
H_{R^{W_n}}
	&= \begin{pmatrix}1 & 0 \\ 0 & 1\end{pmatrix} + \begin{pmatrix}0 & 1 \\ 1 & 0\end{pmatrix}t
		+ \begin{pmatrix}2 & 0 \\ 0 & 2\end{pmatrix}t^2 +  \begin{pmatrix}0 & 2 \\ 2 & 0\end{pmatrix}t^3 + \cdots \\
	&= \left( I - \begin{pmatrix}0 & 1 \\ 1 & 0\end{pmatrix}t\right)\inv \left( I - \begin{pmatrix}1 & 0 \\ 0 & 1\end{pmatrix}t^2\right)\inv.
\end{align*}

Proofs of the relations in the next lemma are similar to the corresponding proofs in Lemma \ref{lem.osums}.

\begin{lemma}
Let $\bullet,\dagger$ denote opposite parities. 
The orbit sums $\OO(\ell,k)$ satisfy the following relations
\begin{align}
\label{eq.WO1} 
\OO(1,0)^{\bullet}\OO(\ell,k)^{\dagger}
&= 
\begin{cases}
	\OO(\ell+1,k)^{\dagger} + \OO(\ell,k+1)^{\dagger} & \text{ if $\ell+k$ is even and $\ell > k$ } \\
	\OO(\ell+1,k)^{\dagger} & \text{ if $\ell+k$ is even and  $\ell=k$} \\
	\OO(\ell+1,k)^{\bullet} + \OO(\ell,k+1)^{\bullet} & \text{ if $\ell+k$ is odd and $\ell > k$ } \\
	\OO(\ell+1,k)^{\bullet} & \text{ if $\ell+k$ is odd and  $\ell=k$}
\end{cases} \\
\label{eq.WO2} (\OO(1,1)^{\bullet})^m &= \OO(m,m)^{\bullet}.
\end{align}
\end{lemma}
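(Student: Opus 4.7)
The plan is to mirror the proof of Lemma~\ref{lem.osums} while keeping track of source parities. The key structural observation is that $W_n$ is generated by reflections through vertices, each of which sends $e_{i+j}$ to $e_{i-j}$ around a fixed vertex $e_i$; since the shift $2j$ is even, such reflections preserve the parity of each vertex index (and so do their products, which are rotations by an even number of positions). Consequently the $D_n$-orbit $B_{\ell,k}$ splits as a disjoint union $B_{\ell,k}^{\even}\sqcup B_{\ell,k}^{\odd}$ of $W_n$-orbits, and one has $\OO(\ell,k) = \OO(\ell,k)^{\even} + \OO(\ell,k)^{\odd}$ as elements of $R$.

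For \eqref{eq.WO1}, first record that $\OO(1,0)^{\bullet}$ is the sum of the arrows $\alpha_i$ with $i$ of parity $\bullet$ together with the arrows $\alpha_i^*$ with $i$ of parity $\dagger$, so each of its summands has source of parity $\bullet$ and target of parity $\dagger$. Since every summand of $\OO(\ell,k)^{\dagger}$ has source of parity $\dagger$, the product $\OO(1,0)^{\bullet}\OO(\ell,k)^{\dagger}$ is a sum of length-$(\ell+k+1)$ paths whose sources all have parity $\bullet$. From here the computation follows the script of \eqref{eq.O1}: distribute, then apply the Structure Lemma identity $\alpha_j^*\alpha_j\alpha_{j+1}\cdots\alpha_{j+r} = \alpha_{j+1}\cdots\alpha_{j+r+1}\alpha_{j+r+1}^*$ to rewrite every star-then-nonstar substring, and finally collect canonical-form summands into the two families $Q_{\ell+1}Q_k^*\cup Q_kQ_{\ell+1}^*$ (yielding an $\OO(\ell+1,k)$-type orbit sum) and $Q_\ell Q_{k+1}^*\cup Q_{k+1}Q_\ell^*$ (yielding an $\OO(\ell,k+1)$-type orbit sum). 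When $\ell > k$ the two families are disjoint and the restricted sums reassemble into the two orbit-sum summands as in \eqref{eq.WO1}; when $\ell=k$ the second family coincides with the first and collapses to a single summand.

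For \eqref{eq.WO2}, I would induct on $m$, the base case being tautological. For the inductive step I would compute $(\OO(1,1)^{\bullet})^{m+1} = \OO(1,1)^{\bullet}\cdot\OO(m,m)^{\bullet}$ exactly as in the proof of \eqref{eq.O2}: each summand $\alpha_i\alpha_i^*$ of $\OO(1,1)^{\bullet}$ multiplies the unique summand of $\OO(m,m)^{\bullet}$ whose source is $e_i$ (which is present precisely because $i$ has parity $\bullet$), and the preprojective relation $\alpha_i^*\alpha_i = \alpha_{i+1}\alpha_{i+1}^*$ rewrites the product into canonical form as the summand of $\OO(m+1,m+1)^{\bullet}$ based at $e_i$. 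Summing over $i$ of parity $\bullet$ gives the identity.

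The one thing to watch is bookkeeping: I must confirm that no preprojective-relation simplification moves a summand out of its parity class. This is automatic, since both sides of the Structure Lemma identity begin at the same vertex $e_{j+1}$ (so the source of each path is preserved by the substitutions) and $W_n$ respects vertex parity. With this observation in hand, the $W_n$-case is an unadorned transcription of the $D_n$ argument of Lemma~\ref{lem.osums}, tagged throughout with the appropriate $\bullet$/$\dagger$ superscripts.
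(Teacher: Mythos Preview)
Your approach is exactly the paper's: its entire proof reads ``Proofs of the relations in the next lemma are similar to the corresponding proofs in Lemma~\ref{lem.osums},'' and your proposal is a faithful elaboration of that sentence, tracking source parity through the same Structure Lemma manipulations and the same induction for \eqref{eq.WO2}. Your bookkeeping remark that all summands of $\OO(1,0)^{\bullet}\OO(\ell,k)^{\dagger}$ have source parity $\bullet$ is correct and is precisely the parity refinement the paper leaves implicit.
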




We set $s_0=\OO(0,0)^{\even}$, $s_1=\OO(1,0)^{\even}$, and $s_2=\OO(2,0)^{\even}$. Similarly, we set
$s_0'=\OO(0,0)^{\odd}$, $s_1'=\OO(1,0)^{\odd}$, and $s_2'=\OO(2,0)^{\odd}$. Let $C$ denote the subalgebra of $R^{W_n}$ generated by these elements. Let $Q$ be the following quiver:
\[  \xymatrix{ 
&  \ar@(ul,dl)[]_{v_1} {}_1 \bullet  \ar@/^0.5pc/[rr]^{u_1}  &  
& \bullet_2 \ar@/^0.5pc/[ll]^{u_2} \ar@(ur,dr)[]^{v_2} & &}
\]
and let $\kk Q$ denote its path algebra. We assign degree 1 to the arrows $u_1,u_2$ and degree 2 to $v_1,v_2$. We will show that $C=R^{W_n}$ and that
$R^{W_n} \iso \kk Q/ (v_1u_1-u_1v_2, v_2u_2-u_2v_1)$.

\begin{remark}
The algebra $\kk Q/ (v_1u_1-u_1v_2, v_2u_2-u_2v_1)$ is a quotient-derivation algebra appearing in the classification of graded twisted Calabi--Yau algebras of global dimension 2 \cite{RR1}. In particular, the matrix corresponding to the Nakayama automorphism $\mu$ is $\left(\begin{smallmatrix}0 & 1 \\ 1 & 0\end{smallmatrix}\right)$ and the $\mu$-twisted superpotential is $v_1u_1-u_2v_1+v_2u_2-u_1v_2$. 
\end{remark}

\begin{lemma}
The relations $s_2s_1 = s_1s_2'$ and $s_2's_1' = s_1's_2$ hold in $C$.
\end{lemma}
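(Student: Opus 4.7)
The plan is to verify both relations by direct calculation in $R$, expanding each side as a sum over even or odd idempotents, and then matching term-by-term using the preprojective relation $\alpha_i \alpha_i^* = \alpha_{i-1}^*\alpha_{i-1}$. The argument is a parity-tracking refinement of the proof that $s_1$ and $s_2$ commute in the $D_n$ case.

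For the first relation, I would first check that both sides are supported on paths starting at an even vertex and ending at an odd vertex: $s_2$ sends even $\to$ even, $s_1 = \OO(1,0)^{\even}$ sends even $\to$ odd, and $s_2' = \OO(2,0)^{\odd}$ sends odd $\to$ odd, so the product rules and parity shifts line up correctly. Writing $s_2 = \sum_{i\text{ even}}(\alpha_i\alpha_{i+1} + \alpha_{i-1}^*\alpha_{i-2}^*)$, $s_1 = \sum_{j\text{ even}}(\alpha_j + \alpha_{j-1}^*)$, and $s_2' = \sum_{j\text{ odd}}(\alpha_j\alpha_{j+1} + \alpha_{j-1}^*\alpha_{j-2}^*)$, I would expand both sides; in each expansion, only four terms survive for each even index $i$ (coming from the two summands of $s_2$ or $s_1$ concatenating with the uniquely matching terms on the other side).

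Two of these four terms, namely $\alpha_i\alpha_{i+1}\alpha_{i+2}$ (pure nonstar) and $\alpha_{i-1}^*\alpha_{i-2}^*\alpha_{i-3}^*$ (pure star), appear identically in both expansions. For the remaining two ``mixed'' terms, I would apply the preprojective relations twice to show
\begin{align*}
\alpha_i\alpha_{i+1}\alpha_{i+1}^* &= \alpha_i\alpha_i^*\alpha_i = \alpha_{i-1}^*\alpha_{i-1}\alpha_i, \\
\alpha_{i-1}^*\alpha_{i-2}^*\alpha_{i-2} &= \alpha_{i-1}^*\alpha_{i-1}\alpha_{i-1}^* = \alpha_i\alpha_i^*\alpha_{i-1}^*,
\end{align*}
which exhibits term-by-term equality with the mixed terms produced by $s_1 s_2'$. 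Summing over even $i$ then yields $s_2 s_1 = s_1 s_2'$.

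The second relation $s_2' s_1' = s_1' s_2$ follows by the identical argument with the parities swapped; equivalently, one can apply the rotation $\rho$ (which intertwines $s_k \leftrightarrow s_k'$) to the first identity. I expect the only obstacle to be the bookkeeping involved in correctly matching up the four mixed monomials per index $i$, since the preprojective relation must be invoked twice to align the positions of the star/nonstar transitions; once the two rewrites above are recorded, the proof is a routine comparison of finite sums.
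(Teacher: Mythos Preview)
Your proposal is correct and follows essentially the same approach as the paper: both expand $s_2s_1$ and $s_1s_2'$ directly, identify four surviving length-3 monomials per even index, and match the two mixed terms using two applications of the preprojective relation $\alpha_i\alpha_i^*=\alpha_{i-1}^*\alpha_{i-1}$. Your observation that applying $\rho$ (which swaps $s_k\leftrightarrow s_k'$) deduces the second relation from the first is a small bonus over the paper's ``similar'' remark, but otherwise the arguments coincide.
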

\begin{proof}
We prove the first relation. The second is similar.
\begin{align*}
s_2s_1 
	&= \left( \sum_{i=0}^{\frac{n-2}{2}} ( \alpha_{2i}\alpha_{2i+1} + \alpha_{2i+1}^*\alpha_{2i}^* ) \right)
	\left( \sum_{i=0}^{\frac{n-2}{2}} (\alpha_{2i} + \alpha_{2i+1}^*) \right) \\
	&= \sum_{i=0}^{\frac{n-2}{2}} ( \alpha_{2i}\alpha_{2i+1}\alpha_{2i+2} 
		+ \alpha_{2i}\alpha_{2i+1}\alpha_{2i+1}^* 
		+ \alpha_{2i+2} \alpha_{2i+2}^*\alpha_{2i+1}^*
		+ \alpha_{2i+1}^*\alpha_{2i}^*\alpha_{2i-1}^* ) \\
	&= \sum_{i=0}^{\frac{n-2}{2}} ( \alpha_{2i}\alpha_{2i+1}\alpha_{2i+2}
		+ \alpha_{2i}\alpha_{2i}^*\alpha_{2i-1}^* 
		+ \alpha_{2i+2}\alpha_{2i+3}\alpha_{2i+3}^* 
		+ \alpha_{2i+1}^*\alpha_{2i}^*\alpha_{2i-1}^* ) \\
	&= \sum_{i=0}^{\frac{n-2}{2}} ( \alpha_{2i}\alpha_{2i+1}\alpha_{2i+2}
		+ \alpha_{2i}\alpha_{2i}^*\alpha_{2i-1}^* 
		+ \alpha_{2i+1}^*\alpha_{2i+1}\alpha_{2i+2} 
		+ \alpha_{2i+1}^*\alpha_{2i}^*\alpha_{2i-1}^* ) \\
 	&= \left( \sum_{i=0}^{\frac{n-2}{2}} (\alpha_{2i} + \alpha_{2i+1}^*) \right)
	\left(  \sum_{i=0}^{\frac{n-2}{2}} (\alpha_{2i+1}\alpha_{2i+2} + \alpha_{2i+2}^*\alpha_{2i+1}^*) \right)
	= s_1s_2'. \qedhere
\end{align*}
\end{proof}

\begin{lemma}
We have $C=R^{W_n}$.
\end{lemma}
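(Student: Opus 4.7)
The plan is to mirror the strategy of Lemma \ref{lem.gen}. Since $s_i + s_i' = \OO(i,0)$ for $i = 0, 1, 2$ lies in $C$, and Theorem \ref{thm.Dn} establishes that $R^{D_n} = \kk[\OO(1,0), \OO(2,0)]$, we have $R^{D_n} \subseteq C$. Each orbit sum $\OO(\ell, k)$ lies in $R^{D_n}$ and splits as $\OO(\ell,k)^{\even} + \OO(\ell,k)^{\odd}$, so it suffices to prove $\OO(\ell, k)^{\even} \in C$ for all $\ell \geq k \geq 0$; the odd counterpart then follows by subtraction.

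I would proceed by strong induction on $d = \ell + k$. The base cases $d \leq 2$ are handled by $s_0, s_1, s_2$, except for $\OO(1,1)^{\even}$, which I would obtain by expanding $s_1 s_1'$ directly. Invoking the preprojective identity $\alpha_i \alpha_i^* = \alpha_{i-1}^* \alpha_{i-1}$ to merge equivalent summands yields a relation of the form $s_1 s_1' = s_2 + c \cdot \OO(1,1)^{\even}$ with $c$ a nonzero integer, which is invertible since $\kk$ has characteristic zero. For the inductive step at degree $d+1$, three subcases arise. If $\ell = k$, invoke \eqref{eq.WO2} to get $\OO(\ell,\ell)^{\even} = (\OO(1,1)^{\even})^{\ell} \in C$. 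If $\ell = k+1$, use the diagonal case of \eqref{eq.WO1}, namely $\OO(1,0)^{\odd} \cdot \OO(k,k)^{\even} = \OO(k+1, k)^{\even}$, whose right-hand factor lies in $C$ by the first subcase applied at a strictly lower degree. If $\ell \geq k + 2$, apply \eqref{eq.WO1} with parities chosen (based on the parity of $\ell + k$) so that the output carries the label $\even$; rearranging yields, up to a harmless scalar, an identity of the form $\OO(\ell, k)^{\even} = \OO(1,0)^{?} \cdot \OO(\ell - 1, k)^{?} - \OO(\ell - 1, k + 1)^{\even}$. The first summand has total degree $d$ and so lies in $C$ by the outer induction (using subtraction to access the odd variant when needed), while the second has the same degree $d + 1$ but smaller distance-to-diagonal $\ell - k - 2$; a secondary induction on $\ell - k$ with base cases given by the first two subcases then closes the argument.

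The main obstacle will be the parity bookkeeping in the third subcase: the parity label of the output of \eqref{eq.WO1} alternates depending on whether $\ell + k$ is even or odd, so the choice of whether to multiply by $s_1$ or $s_1'$, together with the parity attached to $\OO(\ell-1, k)$, must be made accordingly in each case. Once this organization is set up, each inductive step reduces to a one-line application of the orbit-sum relations.
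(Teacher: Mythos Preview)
Your proposal is correct and follows essentially the same approach as the paper: compute $s_1 s_1' = s_2 + 2\,\OO(1,1)^{\even}$ to get $\OO(1,1)^{\even} \in C$, then induct on degree using \eqref{eq.WO1}--\eqref{eq.WO2} exactly as in Lemma~\ref{lem.gen}, keeping track of parity. Your use of $R^{D_n} \subseteq C$ to reduce to the even orbit sums via subtraction is a small efficiency not in the paper (which instead also computes $s_1's_1$ and handles both parities in parallel), but the argument is otherwise the same.
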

\begin{proof}
Clearly, $C \subset R^{W_n}$. We claim $R^{W_n} \subset C$. By definition, $(R^{W_n})_0 \subset C$ and $(R^{W_n})_1 \subset C$. Now
\begin{align*}
s_1s_1' &= \left( \sum_{i=0}^{\frac{n-2}{2}} (\alpha_{2i} + \alpha_{2i+1}^*) \right) \left( \sum_{i=0}^{\frac{n-2}{2}} (\alpha_{2i+1} + \alpha_{2i}^*) \right) \\
	&= \sum_{i=0}^{\frac{n-2}{2}} (\alpha_{2i}\alpha_{2i+1} + \alpha_{2i}\alpha_{2i}^* + \alpha_{2i+1}^*\alpha_{2i+1} + \alpha_{2i+1}^*\alpha_{2i}^*) \\
	&= \sum_{i=0}^{\frac{n-2}{2}} (\alpha_{2i}\alpha_{2i+1} + \alpha_{2i+1}^*\alpha_{2i}^* + 2\alpha_{2i}\alpha_{2i}^*) \\
	&= s_2 + 2\OO(1,1)^{\even}.
\end{align*}
Thus, $\OO(1,1)^{\even} \in C$. A similar proof with $s_1's_1$ shows that $\OO(1,1)^{\odd} \in C$ so that $(R^{W_n})_2 \subset C$.
The remainder of the proof follows similarly to Lemma \ref{lem.gen} with proper respect shown towards parity. In particular, we use \eqref{eq.WO1}-\eqref{eq.WO2}.
\end{proof}

\begin{theorem}
\label{thm.Wn}
The Auslander map is not an isomorphism for the pair $(R,W_n)$.
\end{theorem}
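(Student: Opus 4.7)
The plan is to mirror the proof of Theorem \ref{thm.Dn}. The target is to exhibit a nonzero graded $R^{W_n}$-module endomorphism of $R$ of strictly negative degree; since $R \# W_n$ is non-negatively graded and $\eta_{R,W_n}$ is a graded map, any such endomorphism lies outside the image of the Auslander map, so $\eta_{R,W_n}$ cannot be surjective.

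The first step is to establish the direct-sum decomposition
\[
R \iso \bigoplus_{i=0}^{n-1}\left(e_i R^{W_n} \oplus \alpha_i R^{W_n}\right)
\]
of right $R^{W_n}$-modules. Spanning follows immediately from the corresponding fact over $R^{D_n}$ proved in Theorem \ref{thm.Dn} together with the inclusion $R^{D_n} \subseteq R^{W_n}$. For independence, I would adapt the argument from the proof of Theorem \ref{thm.Dn}: take any homogeneous $a \in e_i R^{W_n} \cap \alpha_i R^{W_n}$, write $a = e_i r = \alpha_i r'$ (forcing $r$ to lie in the projective summand indexed by the parity of $i$ and $r'$ in the opposite one, since $e_i E_{[i]} = e_i$ and $\alpha_i E_{[i+1]} = \alpha_i$), expand $r$ and $r'$ in the refined orbit sums $\OO(\ell,k)^{\even}$ and $\OO(\ell,k)^{\odd}$, and use the alternation between pure star and pure nonstar summands to force all coefficients to vanish. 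The extra bookkeeping relative to the $D_n$ case is that each old orbit sum $\OO(\ell,k)$ has split into two sums distinguished by the parity of the starting vertex.

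Next I would identify each cyclic summand with a graded shift of one of the two indecomposable projective right $R^{W_n}$-modules, $P_0 = E_0 R^{W_n}$ and $P_1 = E_1 R^{W_n}$, where $E_0 = \sum_{i \text{ even}} e_i$ and $E_1 = \sum_{i \text{ odd}} e_i$. The two cases needed are $e_0 R^{W_n} \iso P_0$, induced by $E_0 \mapsto e_0$ (well-defined because the kernel of left multiplication by $e_0$ on $R^{W_n}$ is exactly $E_1 R^{W_n}$, as different even-parity orbit sums pick out distinct nonzero paths starting at $e_0$), and $\alpha_{n-1} R^{W_n} \iso P_0(-1)$: the generator $\alpha_{n-1}$ has target $e_0$ of even parity, and left multiplication by $\alpha_{n-1}$ on $e_0 R$ is injective because no preprojective relation annihilates it (this is verifiable by reducing to the Structure Lemma).

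Composing these two isomorphisms produces an isomorphism $\alpha_{n-1} R^{W_n} \iso e_0 R^{W_n}(-1)$ of degree $-1$; extending by zero on the complementary summands from the first step gives a nonzero $\psi \in \End_{R^{W_n}}(R)$ of degree $-1$. Since $\eta_{R,W_n}(R \# W_n) \subseteq \End_{R^{W_n}}(R)_{\geq 0}$, we have $\psi \notin \im(\eta_{R,W_n})$, so $\eta_{R,W_n}$ fails to be surjective and in particular is not an isomorphism. The main obstacle is the independence part of the direct-sum decomposition: the argument is structurally the same as in Theorem \ref{thm.Dn}, but the doubling of orbit sums under the parity refinement (and the fact that $R^{W_n}$ is no longer connected graded) requires more careful bookkeeping and a more delicate tracking of source parities.
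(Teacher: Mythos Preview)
Your proposal is correct and follows essentially the same approach as the paper: both show that $R$ is a free right $R^{W_n}$-module with basis $\{e_0,\ldots,e_{n-1},\alpha_0,\ldots,\alpha_{n-1}\}$ and then exhibit a negative-degree endomorphism via $\alpha_{n-1} R^{W_n} \iso e_0 R^{W_n}(-1)$. The paper additionally presents $R^{W_n}$ explicitly as a quotient of a two-vertex path algebra (interesting in its own right but not strictly needed here), whereas you streamline the spanning step by invoking the inclusion $R^{D_n} \subseteq R^{W_n}$---a valid shortcut the paper does not take.
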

\begin{proof}
Denote the trivial paths of $Q$ by $f_0,f_1$. There is a map $\phi:\kk Q \to R^{W_n}$ defined by setting
\[ 
f_0 \mapsto s_0, f_1 \mapsto s_0',
u_1 \mapsto s_1, u_2 \mapsto s_1',
v_1 \mapsto s_2, v_2 \mapsto s_2'.
\]
It is easy to verify that this determines a well-defined surjective map $\kk Q \to R^{W_n}$ and \[K=(v_1u_1-u_1v_2, v_2u_2-u_2v_1)\] belongs to $\ker\phi$. By comparing the matrix-valued Hilbert series, it is clear that $\kk Q/K \iso R^{W_n}$.

The remainder of the proof follows analogously to Theorem \ref{thm.Dn}. In particular, $R$ is a free $R^{W_n}$-module with basis $S=\{ e_0,\hdots,e_{n-1},\alpha_0,\hdots,\alpha_{n-1} \}$, and this gives rise to a map in $\End_{R^{W_n}} R$ of negative degree.
\end{proof}

Theorems \ref{thm.Dn} and \ref{thm.Wn} give instances of fixed rings of preprojective algebras which are graded Calabi--Yau. These examples are novel from those presented by Weispfenning in that they do not fix pointwise the degree zero part of $\Pi_{\An}$.

\begin{corollary}
Let $G=D_n$ or $G=W_n$. Then $\p(R,G)=1$.
\end{corollary}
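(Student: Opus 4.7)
The upper bound $\p(R,G) \leq 1$ is immediate from the preceding results: by Theorems \ref{thm.Dn} and \ref{thm.Wn} the Auslander map $\eta_{R,G}$ fails to be an isomorphism, so Theorem \ref{thm.bhz} forces $\p(R,G) < 2$; since $\GKdim$ takes non-negative integer values in this setting we conclude $\p(R,G) \leq 1$.

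For the matching lower bound $\p(R,G) \geq 1$, one must show $\GKdim(R\#G/(f_G)) \leq 1$. By the discussion following Theorem \ref{thm.aus}, this is equivalent to $\GKdim R' \leq 1$ for the identity component $R' = R/I$, where $I = \{r \in R : r \# 1 \in (f_G)\}$ is a two-sided $G$-stable ideal of $R$. The plan exploits the fact that $R$ is prime: since $\Pi_{\An}$ is Morita equivalent to $\kk[x,y]\#(\ZZ/n)$, and the latter is prime ($\kk[x,y]$ is a domain and $\ZZ/n$ acts faithfully), every nonzero element $s \in R$ is a regular element. Right multiplication by such an $s$ is injective on $R$, so the total Hilbert series $H_R(t) = n/(1-t)^2$ yields $H_{R/Rs}(t) = (1 - t^{\deg s}) H_R(t)$, which has a pole of order $1$ at $t=1$, giving $\GKdim R/Rs = 1$. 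It therefore suffices to produce a single nonzero $s \in I$ of positive degree; then $Rs \subseteq I$ and
\[ \GKdim R' = \GKdim R/I \leq \GKdim R/Rs \leq 1, \]
as required.

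To produce such an $s$, we use the identity
\[ \sum_i (a_i \# 1)\, f_G\, (b_i \# 1) = \sum_{g \in G}\Bigl(\sum_i a_i\, g(b_i)\Bigr) \# g, \]
valid for any $a_i, b_i \in R$: if $\sum_i a_i g(b_i) = 0$ for every $g \neq 1$, then $\sum_i a_i b_i \in I$. Equivalently, writing $\phi_g \colon R \otimes R \to R$ for the operator $a \otimes b \mapsto a g(b)$, we must exhibit $\xi \in \bigcap_{g \neq 1} \ker \phi_g$ with $\phi_1(\xi) \neq 0$. The main technical obstacle is constructing such a $\xi$ explicitly; a natural approach starts with the combination $\sum_i \alpha_i \otimes \alpha_i^*$ (for which the rotation components $\phi_{\rho^k}$ vanish automatically since $\alpha_i \alpha_{i+k}^* = 0$ for $k \not\equiv 0 \bmod n$) and then adds correction terms supported at the vertices fixed by each reflection of $G$, using the orbit-sum formulas of Lemmas \ref{lem.Blk} and \ref{lem.Blk2} to verify that the resulting reflection defects can be cancelled while leaving a nontrivial identity component.
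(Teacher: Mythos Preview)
Your upper bound is fine, though the claim that ``$\GKdim$ takes non-negative integer values in this setting'' is exactly Bergman's Gap Theorem (no GK dimension lies strictly between $1$ and $2$); the paper cites this explicitly, and you should too.

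The genuine gap is in the lower bound. You correctly reduce to producing a nonzero homogeneous element $s$ with $s\#1\in(f_G)$, and your primeness/Hilbert-series argument that such an $s$ forces $\GKdim R'\le 1$ is sound. But you never actually produce $s$: you sketch a tensor $\xi=\sum_i\alpha_i\otimes\alpha_i^*$, observe that rotations kill it, and then gesture at ``correction terms'' to handle the reflections. You do not say what those corrections are, nor verify that they do not simultaneously destroy $\phi_1(\xi)\neq 0$ or reintroduce rotation defects. As written, this is a plan, not a proof.

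The paper's construction is far more direct and avoids the tensor-calculus entirely. Observe that the only elements of $G$ fixing $e_0$ are the identity and the reflection $r_0$ through $e_0$; hence
\[
f_1 \;:=\; e_0\,f_G\,e_0 \;=\; e_0\#1 + e_0\#r_0 \in (f_G).
\]
Now let $p=\alpha_0\alpha_1\cdots\alpha_{n-1}$ and $q=\alpha_{n-1}^*\alpha_{n-2}^*\cdots\alpha_0^*$ be the two length-$n$ cycles at $e_0$. Since $r_0(q)=p$ and both $p,q$ start and end at $e_0$, one computes
\[
p\,f_1 - f_1\,q \;=\; (p\#1 + p\#r_0) - (q\#1 + p\#r_0) \;=\; (p-q)\#1.
\]
Thus $s=p-q$ is a nonzero degree-$n$ element with $s\#1\in(f_G)$, and your own argument then gives $\p(R,G)\ge 1$. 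The moral: rather than building $\xi\in R\otimes R$ term by term, exploit the fact that $e_0 f_G e_0$ already has only two surviving group-components, so a single commutator-type expression suffices.
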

\begin{proof}
Let $p=\alpha_0\alpha_1\cdots \alpha_{n-1}$ and $q=\alpha_{n-1}^*\alpha_{n-2}^* \cdots \alpha_0^*$. Set \[f_1 = e_0 \# 1 + e_0\# r_0 = e_0 (f_G) e_0 \in (f_G).\] Then $(p-q) \# 1 = pf_1 - f_1q \in (f_G)$. Consequently, $\p(R,W_n) \geq 1$. By Theorems \ref{thm.bhz}, \ref{thm.Dn}, and \ref{thm.Wn}, $\p(R,G) < 2$. Thus, $\p(R,G)=1$ by Bergman's Gap Theorem \cite{berg}.
\end{proof}

\noindent \textbf{Acknowledgement}\quad
The authors appreciate the referee's comments and suggestions for improving this manuscript.
\bibliographystyle{plain}

\end{document}